\numberwithin{equation}{section}
\newtheorem{thm}{Theorem}[section]
\newtheorem{con}[thm]{Conjecture}
\newtheorem{cor}[thm]{Corollary}
\newtheorem{lem}[thm]{Lemma}
\newtheorem{prop}[thm]{Proposition}
\theoremstyle{definition}
\newtheorem{defn}[thm]{Definition}
\newtheorem{rem}[thm]{Remark}
\newtheorem{exa}[thm]{Example}
\DeclareMathOperator{\Mon}{Mon}
\DeclareMathOperator{\lex}{lex}
\DeclareMathOperator{\supp}{supp}
\begin{document}
\title{\textbf{On The Eisenbud-Green-Harris Conjecture}}
\author[A.Abedelfatah]{Abed Abedelfatah}
\address{Department of Mathematics, University of Haifa, Mount Carmel, Haifa 31905, Israel}
\email{abed@math.haifa.ac.il}
\keywords{Hilbert function, EGH conjecture, Regular sequence}
\begin{abstract}
It has been conjectured by Eisenbud, Green and Harris that if $I$ is a homogeneous ideal in $k[x_1,\dots,x_n]$ containing a regular sequence $f_1,\dots,f_n$ of degrees $\deg(f_i)=a_i$, where $2\leq a_1\leq \cdots \leq a_n$, then there is a homogeneous ideal $J$ containing $x_1^{a_1},\dots,x_n^{a_n}$ with the same Hilbert function. In this paper we prove the Eisenbud-Green-Harris conjecture when $f_i$ splits into linear factors for all $i$.
\end{abstract}
\maketitle

\section{Introduction}
Let $S=k[x_1,\dots,x_n]$ be a polynomial ring over a field $k$. The ring $S=\bigoplus_{d\geq0}S_d$ is graded by $\deg(x_i)=1$ for all $i$. In 1927, F.Macaulay proved that if $I=\bigoplus_{d\geq0}I_d$ is a graded ideal in $S$, then there exists a lex ideal $L$ such that $L$ has the same Hilbert function as $I$ \cite{macaulay}; i.e., every Hilbert function in $S$ is attained by a lex ideal. Let $M$ be a monomial ideal in $S$. It is natural to ask if we have the same result in $S/M$. In \cite{clements}, Clements and Lindstr{\"o}m proved that every Hilbert function in $S/\langle x_1^{a_1},\dots,x_n^{a_n}\rangle$ is attained by a lex ideal, where $2\leq a_1\leq \dots \leq a_n$. In the case $a_1=\cdots=a_n=2$, the result was obtained earlier by Katona \cite{katona} and Kruskal \cite{Kruskal}. Another generalizations of Macaulay's theorem can be found in \cite{shakin}, \cite{mermin2} and \cite{abed}.

Let $f_1,\dots,f_n$ be a regular sequence in $S$ such that $2\leq a_1=\deg(f_1)\leq \cdots\leq a_n=\deg(f_n)$. A well known result says that $\langle f_1,\dots,f_n\rangle$ has the same Hilbert function as $\langle x_1^{a_1},\dots,x_n^{a_n}\rangle$ (see Exercise 6.2. of \cite{monomial}). It is natural to ask what happens if $I\subseteq S$ is a homogeneous ideal containing a regular sequence in fixed degrees. This question bring us to the Eisenbud-Green-Harris Conjecture, denoted by EGH.

\begin{con}[EGH \cite{eisenbud}]\label{1}{\ \\}
If $I$ is a homogeneous ideal in $S$ containing a regular sequence $f_1,\dots,f_n$ of degrees $\deg(f_i)=a_i$, where $2\leq a_1\leq\cdots\leq a_n$, then $I$ has the same Hilbert function as an ideal containing $x_1^{a_1},\dots,x_n^{a_n}$.
\end{con}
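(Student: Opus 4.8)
The plan is to reduce the conjecture to an intrinsic Macaulay-type growth bound for the Artinian algebra cut out by the regular sequence, and then to transfer the known bound for $B=S/\langle x_1^{a_1},\dots,x_n^{a_n}\rangle$ to it by degeneration. Since the complete intersection $\langle f_1,\dots,f_n\rangle$ already has the same Hilbert function as $\langle x_1^{a_1},\dots,x_n^{a_n}\rangle$ by the cited result, I would pass to the graded Artinian quotients $A=S/\langle f_1,\dots,f_n\rangle$ and $B=S/\langle x_1^{a_1},\dots,x_n^{a_n}\rangle$, which share one and the same Hilbert function. Writing $\bar I=I/\langle f_1,\dots,f_n\rangle$, one has $\dim_k(S/I)_d=\dim_k(A/\bar I)_d$, and likewise for ideals of $B$; as the ambient Hilbert functions of $A$ and $B$ agree, Conjecture \ref{1} is equivalent to the statement that every Hilbert function of a graded ideal of $A$ is also realized by a graded ideal of $B$.

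By the theorem of Clements and Lindström, every Hilbert function occurring in $B$ is attained by a lex ideal, and these lex ideals are the extremal ones: a numerical function is the Hilbert function of a quotient of $B$ precisely when it obeys the Clements--Lindström growth bound $\dim_k(B/\bar J)_{d+1}\leq \beta_d\bigl(\dim_k(B/\bar J)_d\bigr)$ for the appropriate binomial-type function $\beta_d$. Hence it suffices to prove that $A$ satisfies the \emph{same} bound, namely $\dim_k(A/\bar I)_{d+1}\leq \beta_d\bigl(\dim_k(A/\bar I)_d\bigr)$ for every graded ideal $\bar I\subseteq A$ and every $d$; any such Hilbert function is then a Clements--Lindström sequence, and so is realized by a lex ideal in $B$, giving the desired $J$.

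The natural way to import this bound from $B$ to $A$ is by degeneration. Fixing a monomial order and passing to initial ideals already yields a monomial ideal $\mathrm{in}(I)$ with the same Hilbert function as $I$ and containing $\mathrm{in}(f_1),\dots,\mathrm{in}(f_n)$. If, after a generic linear change of coordinates, one could arrange $\mathrm{in}(f_i)=x_i^{a_i}$ for all $i$ simultaneously, then $J:=\mathrm{in}(I)\supseteq\langle x_1^{a_1},\dots,x_n^{a_n}\rangle$ would be an ideal of the required form with $\dim_k(S/J)_d=\dim_k(S/I)_d$, and the proof would be complete. Failing that, I would try to construct a flat family over $\mathbb{A}^1_k$ whose general fibre is $S/I$ and whose special fibre is $S/J$ with $J\supseteq\langle x_1^{a_1},\dots,x_n^{a_n}\rangle$, driven by a degeneration of the regular sequence $f_1,\dots,f_n$ to the monomial regular sequence; flatness would then force the two fibres to have equal Hilbert functions.

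The main obstacle is exactly this degeneration step, and it is what makes the conjecture hard. For a general regular sequence, no monomial order and no linear change of coordinates makes the initial ideal of $\langle f_1,\dots,f_n\rangle$ equal to $\langle x_1^{a_1},\dots,x_n^{a_n}\rangle$: while this initial ideal necessarily has the same Hilbert function as the monomial complete intersection, it is in general a different monomial ideal, failing to contain some pure power $x_i^{a_i}$ --- already a generic complete intersection of three quadrics in $k[x_1,x_2,x_3]$ exhibits this, and such quadrics admit no factorization into linear forms. Equivalently, there is no canonical flat degeneration carrying the pair $\bigl(I,\langle f_1,\dots,f_n\rangle\bigr)$ to $\bigl(J,\langle x_1^{a_1},\dots,x_n^{a_n}\rangle\bigr)$. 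When every $f_i$ does split into linear forms this obstruction can be sidestepped by changing coordinates to expose the linear factors and invoking Clements--Lindström one factor at a time, which is the route this paper takes; for forms with no linear factors that mechanism has no analogue, and supplying such a degeneration --- or an intrinsic homological proof of the growth bound $\beta_d$ for $A$ --- for arbitrary regular sequences is the crux that leaves the general conjecture open.
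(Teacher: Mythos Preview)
The statement you were asked to prove is Conjecture~\ref{1}, which the paper does \emph{not} prove: it is stated as an open conjecture, and the paper establishes only the special case in which every $f_i$ factors into linear forms (Corollary~\ref{22}). Your write-up is not a proof either, and you say so yourself: after reducing the question to a Clements--Lindstr{\"o}m growth bound in $A=S/\langle f_1,\dots,f_n\rangle$, you propose obtaining it by degenerating $\langle f_1,\dots,f_n\rangle$ to $\langle x_1^{a_1},\dots,x_n^{a_n}\rangle$ via initial ideals or a flat one-parameter family, and then correctly observe that no such degeneration is available in general. That observation \emph{is} the gap, and it is precisely the gap that keeps the conjecture open; nothing in your outline closes it, so what you have written is commentary on the difficulty rather than a proof.

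Your description of what the paper does in the split case is also somewhat off. The paper does not run a Gr{\"o}bner or flat-family degeneration at all. Theorem~\ref{21} argues by induction on $n$: writing $f_n=q_1\cdots q_s$ with $q_i\in S_1$, it filters $S/J$ (with $J$ generated by $f_1,\dots,f_n$ and $I_d$) by the short exact sequences
\[
0\to S/(J:q_1\cdots q_i)\to S/(J:q_1\cdots q_{i-1})\to S/\bigl((J:q_1\cdots q_{i-1})+\langle q_i\rangle\bigr)\to 0,
\]
applies the inductive hypothesis in $S/\langle q_i\rangle\cong k[x_1,\dots,x_{n-1}]$ to each successive quotient, replaces the results by lex-plus-powers ideals $L_i$ in $n-1$ variables via Clements--Lindstr{\"o}m, and reassembles them into $K$ generated by $\{zx_n^i:z\in L_i\}$ together with $x_n^s$. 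The delicate point is ordering the $q_i$ so that $L_{i,j}\subseteq L_{i+1,j}$ in the relevant degree range, which is where Lemma~\ref{20} enters. So even in the case the paper handles, the mechanism is a filtration by linear factors and induction on the number of variables, not a degeneration of the complete intersection to a monomial one.
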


The original conjecture (see Conjecture \ref{18}) is equivalent to Conjecture \ref{1} in the case $a_i=2$ for all $i$ (see Proposition \ref{19}). The EGH Conjecture is known to be true in few cases. The conjecture has been proven in the case $n=2$ \cite{richert}. Caviglia and Maclagan \cite{caviglia} have proven that the EGH Conjecture is true if $a_j>\sum_{i=1}^{j-1}(a_i-1)$ for all $j>1$. Richert \cite{richert} says that the EGH Conjecture in degree 2 ($a_i=2$ for all $i$) holds for $n\leq5$, but this result was not published. Herzog and Popescu \cite{herzog} proved that if $k$ is a field of characteristic zero and $I$ is minimally generated by generic quadratic forms, then the EGH Conjecture in degree 2 holds. Cooper \cite{cooper1,cooper2} has done some work in a geometric direction. She studies the EGH Conjecture for some cases with $n=3$.

Let $f_1,\dots,f_n$ be a regular sequence in $S$ such that $f_i$ splits into linear factors for all $i$. For all $1\leq i\leq n$, let $p_i\in S_1$ such that $p_i|f_i$. Since $p_1,\dots,p_n$ must be a $k$-linear independent, it follows that the $k$-algebra map $\alpha:S\rightarrow S$ defined by $\alpha(x_i)=p_i$ for all $1\leq i\leq n$, is a graded isomorphism. So the Hilbert function is preserved under this map and we may assume that $p_i=x_i$ for all $i$.

In Section 2, we give background information to the EGH Conjecture. In section 3, we study the dimension growth of some ideals containing a regular sequence $x_1l_1,\dots,x_nl_n$, where $l_i\in S_1$ for all $i$. In section 4, we prove the EGH Conjecture when $f_i$ splits into linear factors for all $i$. This answers a question of Chen, who asked if the EGH Conjecture holds when $f_i=x_il_i$, where $l_i\in S_1$ for all $1\leq i\leq n$ (see Example 3.8 of \cite{chen}).

\section{Background}
A proper ideal $I$ in $S$ is called \emph{graded} or \emph{homogeneous} if it has a system of homogeneous generators. Let $R=S/I$, where $I$ is a homogeneous ideal. The \emph{Hilbert function} of $I$ is the sequence $H(R)=\{H(R,t)\}_{t\geq0}$, where $$H(R,t):=\dim_{k}R_t=\dim_{k}S_t/I_t.$$
For simplicity, sometimes we denote the dimension of a $k$-vector space $V$ by $|V|$ instead of $\dim_{k}V$. For a $k$-vector space $V\subseteq S_d$, where $d\geq 0$, we denote by $S_1V$ the $k$-vector space spanned by $\{x_iv:~1\leq i\leq n~\wedge~v\in V\}$. Throughout this paper $\textbf{A}=(a_1,\dots,a_n)\in \mathbb{Z}^n$, where $2\leq a_1\leq\cdots\leq a_n$. For a subset $A$ of $S$, we denote by $\Mon(A)$ the set of all monomials in $A$ and let $A_u=\{j:~x_j|u\}$, where $u\in \Mon(S)$. The \emph{support} of the polynomial $f=\sum_{u\in\Mon(S)}a_uu$, where $a_u\in k$, is the set $$\supp(f)=\{u\in \Mon(S):~a_u\neq0\}.$$
A monomial $w\in S$ is called \emph{square-free} if $x_i^2\nmid w$, for all $1\leq i\leq n$. We define the \emph{lex order} on $\Mon(S)$ by setting $\textbf{x}^b=x_1^{b_1}\cdots x_n^{b_n}<_{\lex}x_1^{c_1}\cdots x_n^{c_n}=\textbf{x}^c$ if either $\deg(\textbf{x}^b)<\deg(\textbf{x}^c)$ or $\deg(\textbf{x}^b)=\deg(\textbf{x}^c)$ and $b_i<c_i$ for the first index $i$ such that $b_i\neq c_i$. We recall the definitions of lex ideal and lex-plus-powers ideal.

\begin{defn}
\begin{itemize}
  \item A graded ideal is called \emph{monomial} if it has a system of monomial generators.
  \item A monomial ideal $I\subseteq S$ is called \emph{lex}, if whenever $I\ni z<_{\lex}w$, where $w,z$ are monomials of the same degree, then $w\in I$.
  \item A monomial ideal $I$ is \emph{$\textbf{A}$-lex-plus-powers} if there exists a lex ideal $L$ such that $I=\langle x_1^{a_1},\dots,x_n^{a_n}\rangle+L$.
\end{itemize}
\end{defn}

\begin{exa}
the ideal $I=\langle x_1^2,x_2^2,x_1x_2x_3,x_3^3\rangle$ is a $(2,2,3)$-lex-plus-powers ideal in $k[x_1,x_2,x_3]$, because $I=\langle x_1^2,x_2^2,x_3^3\rangle+\langle x_1^3,x_1^2x_2,x_1^2x_3,x_1x_2^2,x_1x_2x_3\rangle$ and\\ $\langle x_1^3,x_1^2x_2,x_1^2x_3,x_1x_2^2,x_1x_2x_3\rangle$ is a lex ideal in $k[x_1,x_2,x_3]$.
\end{exa}

By Clements-Lindstr{\"o}m's theorem, we obtain that for any graded ideal containing $\langle x_1^{a_1},\dots,x_n^{a_n}\rangle$ there is a $(a_1,\dots,a_n)$-lex-plus-powers ideal with the same Hilbert function.

Let $p\geq0$ and ${s_q \choose q}+{s_{q-1}\choose q-1}+\cdots+{s_1\choose 1}$ be the unique Macaulay expansion of $p$ with respect to $q>0$. Set $0^{(q)}=0$ and $p^{(q)}={s_q \choose q+1}+{s_{q-1}\choose q}+\cdots+{s_1\choose 2}$. In \cite{eisenbud}, Eisenbud, Green and Harris made the following conjecture.

\begin{con}\label{18}
If $I\subset S$ is a graded ideal such that $I_2$ contains a regular sequence of maximal length and $d>0$, then $H(S/I,d+1)\leq H(S/I,d)^{(d)}$.
\end{con}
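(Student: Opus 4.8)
The plan is to recognize the operation $p\mapsto p^{(d)}$ as the Kruskal--Katona (squarefree Macaulay) function and to route the proof through Clements--Lindstr{\"o}m. Specializing their theorem to $a_1=\cdots=a_n=2$ (the Katona--Kruskal case), among all graded ideals containing $\langle x_1^2,\dots,x_n^2\rangle$ the fastest possible growth of the Hilbert function from degree $d$ to degree $d+1$ is exactly $H(d+1)=H(d)^{(d)}$, and it is realized by the lex ideal of $S/\langle x_1^2,\dots,x_n^2\rangle$. Consequently, any ideal whose Hilbert function agrees with that of some ideal containing the pure squares automatically satisfies the bound of Conjecture~\ref{18}. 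Using the equivalence recorded in Proposition~\ref{19}, I would therefore reduce Conjecture~\ref{18} to Conjecture~\ref{1} in the case $a_i=2$: it suffices to show that every graded ideal $I$ whose degree-$2$ piece contains a regular sequence $q_1,\dots,q_n$ of quadrics has the same Hilbert function as some ideal containing $\langle x_1^2,\dots,x_n^2\rangle$.

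To realize this comparison I would work inside the Artinian complete intersection $A=S/\langle q_1,\dots,q_n\rangle$, whose Hilbert function already coincides with that of $B=S/\langle x_1^2,\dots,x_n^2\rangle$, and view $S/I=A/\bar I$ as a quotient of $A$. The aim is to produce a flat, Hilbert-function--preserving degeneration of the regular sequence $q_1,\dots,q_n$ onto the monomial regular sequence $x_1^2,\dots,x_n^2$, carrying $\bar I$ along so that the Hilbert function of $S/I$ is unchanged in the limit; once the special fibre contains the pure squares, Clements--Lindstr{\"o}m finishes. When the $q_i$ split, the linear change of coordinates $\alpha$ already brings the sequence to the form $x_il_i$ with $l_i\in S_1$, and the dimension-growth estimates developed in Section~3 supply the required bound directly, so this portion of the argument is unconditional.

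The hard part will be the non-split quadrics. A general regular sequence of quadrics admits no linear change of coordinates to the shape $x_il_i$, and the naive degeneration fails: already for $n=2$ the generic initial ideal of $\langle q_1,q_2\rangle$ is $\langle x_1^2,x_1x_2,x_2^3\rangle$, which omits $x_2^2$, so the special fibre need not contain the pure powers. Thus the degeneration must be engineered rather than taken generically, and one must guarantee simultaneously that the limiting regular sequence is $x_1^2,\dots,x_n^2$ and that the Hilbert function of the enlarged ideal $I$ is preserved throughout the family. Producing such a family for arbitrary quadrics---equivalently, exhibiting a Clements--Lindstr{\"o}m-type compression directly inside the complete intersection $A$---is the crux on which the argument turns, and it is exactly the point at which the degree-$2$ case resists the methods that settle the split case.
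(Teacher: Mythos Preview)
The statement you were asked to prove is labelled \emph{Conjecture}~\ref{18} in the paper, and the paper does \emph{not} prove it in general. What the paper establishes is: (i) the bound holds when $I$ already contains $x_1^2,\dots,x_n^2$ (this is Kruskal--Katona, stated just after the conjecture); (ii) Conjecture~\ref{18} is equivalent to the degree-$2$ case of the EGH Conjecture~\ref{1} (Proposition~\ref{19}); and (iii) EGH holds when every $f_i$ splits into linear factors (Theorem~\ref{21} and Corollary~\ref{22}). The full Conjecture~\ref{18}, i.e.\ for arbitrary quadratic regular sequences, remains open in the paper.

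Your proposal tracks this accurately: you correctly route the problem through Proposition~\ref{19} to EGH in degree~$2$, you note that the split case is settled by the paper's results, and you explicitly identify the non-split case as the missing piece, even pointing out why a naive Gr\"obner degeneration fails (the generic initial ideal of a quadratic complete intersection need not contain the pure squares). So your ``proof proposal'' is not a proof, and you say so yourself; but there is no proof in the paper to compare it against. In that sense your assessment of the situation is correct.

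One technical correction: you attribute the split case to ``the dimension-growth estimates developed in Section~3.'' That is not how the paper proves it. Section~3 computes $\dim S_1V$ only when $V=P_d+(w_1,\dots,w_t)$ with the $w_i$ actual square-free \emph{monomials}; the Remark following Lemma~\ref{13} shows the key identity fails once the $w_i$ are replaced by linear combinations of square-free monomials, which is what one gets for a general $I\supseteq P$. The split case (and more) is instead proved by the inductive mechanism of Theorem~\ref{21}: peel off one linear factor of $f_n$ at a time via short exact sequences and colon ideals, apply the EGH hypothesis in $n-1$ variables, and reassemble using lex-plus-powers ideals. Section~3 is largely independent of that argument.
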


Conjecture \ref{18} is true if the ideal contains the squares of the variables. This follows from the Kruskal-Katona theorem (see \cite{gotzmann}). In the following proposition, we prove the equivalence of Conjecture \ref{18} and the EGH Conjecture in degree 2. First, we need the following definition.

\begin{defn}
Let $M$ be a monomial ideal in $S$ and $d\geq0$. A monomial vector space $L_d$ in $(S/M)_d$ is called \emph{lexsegment} if it is generated by the $t$ biggest monomials (with respect to the lex order) in $(S/M)_d=S_d/M_d$, for some $t\geq0$.
\end{defn}

For example, if $L$ is a lex ideal in $S$, then $L_j$ is lexsegment for all $j\geq0$. If $L_d$ is a lexsegment space in $(S/M)_d$, where $M$ is a monomial ideal in $S$, then $S_1L_d$ is lexsegment in $(S/M)_{d+1}$ (see Proposition 2.5 of \cite{mermin2}).

\begin{prop}\label{19}
Let $f_1,\dots,f_n$ be a regular sequence of degrees 2 in $S$. The following are equivalent:
\begin{itemize}
  \item [(a)] If $I$ is a graded ideal in $S$ containing $f_1,\dots,f_n$, then there is a graded ideal $J$ in $S$ containing $x_1^2,\dots,x_n^2$ such that $H(S/I)=H(S/J)$.
  \item [(b)] If $I$ is a graded ideal in $S$ containing $f_1,\dots,f_n$, then \begin{center}$H(S/I,d+1)\leq H(S/I,d)^{(d)}$ for all $d>0$.\end{center}
\end{itemize}
\end{prop}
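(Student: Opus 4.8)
The plan is to prove the two implications separately; $(a)\Rightarrow(b)$ is essentially formal, and the content lies in $(b)\Rightarrow(a)$. For $(a)\Rightarrow(b)$, suppose $I$ is a graded ideal containing the regular sequence $f_1,\dots,f_n$. Applying $(a)$ produces a graded ideal $J$ with $\langle x_1^2,\dots,x_n^2\rangle\subseteq J$ and $H(S/J)=H(S/I)$. Since $J_2$ then contains the regular sequence $x_1^2,\dots,x_n^2$ of maximal length, the case of Conjecture~\ref{18} that follows from the Kruskal--Katona theorem gives $H(S/J,d+1)\le H(S/J,d)^{(d)}$ for every $d>0$, and transporting this across $H(S/J)=H(S/I)$ yields $(b)$.

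For $(b)\Rightarrow(a)$ I would argue by an explicit construction of a lex-plus-powers ideal. Write $E=\langle x_1^2,\dots,x_n^2\rangle$, $R=S/E$, and $h_d=H(S/I,d)$. Because $I$ contains the regular sequence $f_1,\dots,f_n$ of degree $2$, the standard fact (Exercise 6.2 of \cite{monomial}) that such a sequence has the same Hilbert function as $\langle x_1^2,\dots,x_n^2\rangle$ gives $h_d\le H(S/\langle f_1,\dots,f_n\rangle,d)=H(R,d)=\binom nd$ for all $d$. Hence, for each $d$, there is a unique lexsegment space $V_d\subseteq R_d$ with $\dim V_d=\binom nd-h_d$. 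The goal is to show that $\bigoplus_d V_d$ is an ideal of $R$: once this is known, its preimage $J\subseteq S$ under the quotient map $S\to R$ is a graded ideal with $J\supseteq E=\langle x_1^2,\dots,x_n^2\rangle$ and $H(S/J,d)=\dim R_d-\dim V_d=h_d=H(S/I,d)$, which is exactly what $(a)$ asks for.

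To verify that $\bigoplus_d V_d$ is an ideal it is enough to check $S_1V_d\subseteq V_{d+1}$ for all $d\ge0$; for $d=0$ this is trivial since $V_0=0$. For $d\ge1$, both $S_1V_d$ and $V_{d+1}$ are lexsegment spaces in $R_{d+1}$ — the former by Proposition~2.5 of \cite{mermin2}, the latter by construction — so the desired inclusion is equivalent to the numerical inequality $\dim S_1V_d\le\dim V_{d+1}=\binom n{d+1}-h_{d+1}$. Here I would invoke the sharp form of the Kruskal--Katona (equivalently, Clements--Lindström in degree $2$) theorem: a lexsegment of codimension $h$ in $R_d$ grows to a subspace of codimension exactly $h^{(d)}$ in $R_{d+1}$, that is, $\dim S_1V_d=\binom n{d+1}-h_d^{(d)}$. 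Combined with hypothesis $(b)$, namely $h_{d+1}\le h_d^{(d)}$, this gives $\dim S_1V_d=\binom n{d+1}-h_d^{(d)}\le\binom n{d+1}-h_{d+1}=\dim V_{d+1}$, completing the argument.

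The one point requiring genuine care is the sharp Kruskal--Katona statement — that a lexsegment actually \emph{attains} the Clements--Lindström bound $h\mapsto h^{(d)}$, rather than merely satisfying it as an upper bound. This is the extremality built into Macaulay's and Clements--Lindström's theorems (cf. \cite{gotzmann}) and is where the function $p\mapsto p^{(q)}$ of the introduction plays its defining role; it is also the reason the construction uses lexsegments rather than arbitrary monomial spaces. Everything else — the bound $h_d\le\binom nd$, the reduction of the ideal condition to dimension counts via Proposition~2.5 of \cite{mermin2}, and the passage back to $S$ — is routine.
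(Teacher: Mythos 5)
Your proposal is correct and matches the paper's own argument in essence: (a)$\Rightarrow$(b) via Kruskal--Katona applied to an ideal containing the squares, and (b)$\Rightarrow$(a) by building the ideal degree by degree from lexsegment spaces modulo $\langle x_1^2,\dots,x_n^2\rangle$, reducing $S_1V_d\subseteq V_{d+1}$ to a dimension comparison of lexsegments (Proposition 2.5 of \cite{mermin2}) and using the exact growth formula for lexsegments together with hypothesis (b). The ``sharp'' growth statement you flag is precisely Proposition 6.4.3 of \cite{monomial}, which is what the paper cites at that step, so your only difference is cosmetic (working in $S/\langle x_1^2,\dots,x_n^2\rangle$ and pulling back, rather than with $L_d+M_d$ inside $S$).
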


\begin{proof}
First, we prove that (a) implies (b). Let $I$ be a graded ideal in $S$ containing $f_1,\dots.f_n$. By (a), it follows that there is a graded ideal $J$ in $S$ containing $x_1^2,\dots,x_n^2$ such that $H(S/I)=H(S/J)$. By Kruskal-Katona theorem it follows that $H(S/I,d+1)=H(S/J,d+1)\leq H(S/J,d)^{(d)}=H(S/I,d)^{(d)}$ for all $d>0$.

Now, we prove that (b) implies (a). Let $I$ be a graded ideal in $S$ containing $f_1,\dots,f_n$. Set $M=\langle x_1^2,\dots,x_n^2\rangle$ and $P=\langle f_1,\dots,f_n\rangle$. For every $d\geq0$, let $L_d$ be the $k$-vector space spanned by the first square-free monomials (in lex order) of $S_d$ such that $|L_d\oplus M_d|=|I_d|$. Let $K=\oplus_{j\geq0}K_j=\oplus_{j\geq0}(L_j+M_j)$. We need to show that $K$ is an ideal. Let $d>0$. By Proposition 6.4.3 of \cite{monomial}, we obtain that $$|S_{d+1}/M_{d+1}|-|S_1L_d/S_1L_d\cap M_{d+1}|=(|S_d/M_d|-|L_d|)^{(d)}.$$ By the hypothesis of (b), we obtain $(|S_d/M_d|-|L_d|)^{(d)}=|S_d/I_d|^{(d)}\geq |S_{d+1}/I_{d+1}|$. So $$|S_{d+1}/M_{d+1}|-|S_1L_d/S_1L_d\cap M_{d+1}|=|S_{d+1}/M_{d+1}|-|S_1L_d+M_{d+1}/M_{d+1}|\geq|S_{d+1}/I_{d+1}|.$$ This implies that $|S_1L_d+M_{d+1}|\leq|L_{d+1}+M_{d+1}|$. Since $\overline{L_{d+1}}$ and $\overline{S_1L_d}$ are lexsegments in $(S/M)_{d+1}$, it follows that $S_1L_d\subseteq L_{d+1}+M_{d+1}$. So $S_1K_d\subseteq K_{d+1}$ for all $d\geq0$, which implies that $K$ is a graded ideal in $S$. Clearly, $H(S/K)=H(S/I)$.
\end{proof}

The following lemma helps us to study the EGH Conjecture in each component of the homogeneous ideal.
\begin{lem}\label{2}
Let $I$ be a graded ideal in $S$ containing a regular sequence $f_1,\dots,f_n$ of degrees $\deg(f_i)=a_i$. The following are equivalent:
\begin{itemize}
  \item [(a)] There exists a graded ideal $J$ in $S$ containing $x_1^{a_1},\dots,x_n^{a_n}$ such that $H(S/I)=H(S/J)$.
  \item [(b)] For every $d\geq0$, there exists a graded ideal $J$ in $S$ containing $x_1^{a_1},\dots,x_n^{a_n}$ such that $H(S/I,d)=H(S/J,d)$ and  $H(S/I,d+1)\leq H(S/J,d+1)$.
\end{itemize}
\end{lem}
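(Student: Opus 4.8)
The plan is to prove Lemma \ref{2} by showing that the componentwise comparison in (b) can be assembled into a single ideal realizing the Hilbert function of $S/I$ globally. The direction (a) $\Rightarrow$ (b) is immediate: if $J$ works globally, then $H(S/I,d) = H(S/J,d)$ and $H(S/I,d+1) = H(S/J,d+1) \leq H(S/J,d+1)$ for every $d$, so the same $J$ witnesses (b). The substance is the converse.

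For (b) $\Rightarrow$ (a), I would mimic the construction used in the proof of Proposition \ref{19}, but now over the quotient $S/M$ with $M = \langle x_1^{a_1},\dots,x_n^{a_n}\rangle$, invoking Clements--Lindström in place of Kruskal--Katona. Concretely: for each degree $d \geq 0$, let $L_d$ be the lexsegment space in $(S/M)_d$ with $|L_d| = |I_d / (I_d \cap M_d)|$ — equivalently, the one making $|L_d + M_d| = |I_d + M_d|$; but one must be a little careful, since $I$ need not contain $M$. The first step is therefore to reduce to the case $M \subseteq I$: replacing $I$ by $I + M$ only decreases each Hilbert function value, and hypothesis (b), applied degree by degree, should let me check that the lex-plus-powers spaces $L_d + M_d$ still have the right dimensions. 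Actually the cleanest route is: by Clements--Lindström, for each $d$ choose the lexsegment $L_d \subseteq (S/M)_d$ with $|(S/M)_d| - |L_d| = H(S/I,d)$, and set $K = \bigoplus_d (L_d + M_d)$. One then needs $K$ to be an ideal, i.e. $S_1 K_d \subseteq K_{d+1}$ for all $d$.

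The key step — and the main obstacle — is exactly this verification that $K$ is an ideal, which is where hypothesis (b) gets used. Fix $d$. By (b) there is a graded ideal $J$ containing $x_1^{a_1},\dots,x_n^{a_n}$ with $H(S/J,d) = H(S/I,d)$ and $H(S/J,d+1) \leq H(S/I,d+1)$. By Clements--Lindström I may replace $J$ with an $\mathbf{A}$-lex-plus-powers ideal having the same Hilbert function, so $J_d = \widetilde{L}_d + M_d$ for a lexsegment $\widetilde{L}_d$; since $|\widetilde L_d + M_d| = |J_d| = |I_d + \text{(stuff)}|$ matches the dimension defining $L_d$, and lexsegments of a given dimension in $(S/M)_d$ are unique, we get $L_d = \widetilde L_d$, hence $K_d = J_d$. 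Then $S_1 K_d = S_1 J_d \subseteq J_{d+1}$, and $J_{d+1}$ is lex-plus-powers of dimension $\geq |K_{d+1}|$ (because $H(S/J,d+1)\leq H(S/I,d+1)$ forces $|J_{d+1}| \geq |K_{d+1}|$). Now the crucial sub-point: $S_1 L_d$ is lexsegment in $(S/M)_{d+1}$ by Proposition 2.5 of \cite{mermin2}, and $L_{d+1}$ is lexsegment there by construction; two lexsegments in the same space are nested, and the dimension inequality $|S_1 L_d + M_{d+1}| \leq |J_{d+1}| $... — here I must be slightly careful, since I want $S_1 L_d \subseteq L_{d+1} + M_{d+1} = K_{d+1}$, which needs $|S_1 L_d + M_{d+1}| \leq |L_{d+1} + M_{d+1}|$. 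This follows because $S_1 L_d + M_{d+1} \subseteq J_{d+1}$ gives $|S_1 L_d + M_{d+1}| \leq |J_{d+1}| = |(S/M)_{d+1}| - H(S/J,d+1) \leq |(S/M)_{d+1}| - H(S/I,d+1) = |K_{d+1}|$, using $H(S/J,d+1) \leq H(S/I,d+1)$ and the definition of $K$.

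Once $S_1 K_d \subseteq K_{d+1}$ is established for all $d$, $K$ is a graded ideal containing $x_1^{a_1},\dots,x_n^{a_n}$, and by construction $H(S/K,d) = |(S/M)_d| - |L_d| = H(S/I,d)$ for every $d$, giving (a). I expect the only genuinely delicate point to be bookkeeping around whether $I$ contains $M$ and making sure the dimension-matching that pins down $L_d = \widetilde L_d$ is stated for the correct vector spaces; the lexsegment-nesting and the growth estimate themselves are quotient-ring analogues of the arguments already carried out in Proposition \ref{19}, now powered by Clements--Lindström and Proposition 6.4.3 of \cite{monomial} instead of Kruskal--Katona.
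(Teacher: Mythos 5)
Your proposal is correct and is essentially the paper's own argument in different packaging: the paper also uses Clements--Lindstr\"om to replace each $J_d$ from (b) by an $\textbf{A}$-lex-plus-powers ideal and then splices the degree-$d$ components (your $K_d=L_d+M_d$ equals its $J_{d,d}$), the containment $S_1K_d\subseteq K_{d+1}$ coming from the same dimension count, with the paper using $S_1J_{d,d}\subseteq J_{d,d+1}\subseteq J_{d+1,d+1}$ and nestedness of lex-plus-powers components where you route through Proposition 2.5 of \cite{mermin2}. One correction before this could be used: you repeatedly quote (b) with the inequality reversed (writing $H(S/J,d+1)\leq H(S/I,d+1)$ and deducing $|J_{d+1}|\geq |K_{d+1}|$), whereas (b) gives $H(S/I,d+1)\leq H(S/J,d+1)$, i.e.\ $|J_{d+1}|\leq |K_{d+1}|$, which is precisely the direction your displayed chain $|S_1L_d+M_{d+1}|\leq |J_{d+1}|\leq |K_{d+1}|$ actually needs (and in that chain $|(S/M)_{d+1}|$ should read $|S_{d+1}|$, since $|J_{d+1}|=|S_{d+1}|-H(S/J,d+1)$).
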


\begin{proof}
Clearly, (a) implies (b). We will show that (b) implies (a). For every $d\geq0$, there exists an ideal $J_d$ in $S$ containing $x_1^{a_1},\dots,x_n^{a_n}$ such that $H(S/I,d)=H(S/J_d,d)$ and  $H(S/I,d+1)\leq H(S/J_d,d+1)$. By Clements-Lindstr{\"o}m's theorem, we may assume that $J_d$ is a $\textbf{A}$-lex-plus-powers ideal for all $d$. Let $J=\oplus_{j\geq0}J_{j,j}$, where $J_{j,j}$ is the $j$-th component of $J_j$. Since $\dim(J_{d,d+1})\leq\dim(I_{d+1})=\dim(J_{d+1,d+1})$, it follows that $J_{d,d+1}\subseteq J_{d+1,d+1}$, for all $d$. So $S_1J_{d,d}\subseteq J_{d,d+1}\subseteq J_{d+1,d+1}$, for all $d$. Thus, $J$ is an ideal. Clearly, $H(S/I)=H(S/J)$.
\end{proof}

We will use the following lemma on regular sequences (see \cite[Chapter 6]{matsumura}).

\begin{lem}\label{3} Let $f_1,\dots,f_n$ be a sequence of homogeneous polynomials in $S$ with $\deg(f_i)=a_i$ and $P=\langle f_1,\dots,f_n\rangle$. Then
\begin{itemize}
  \item[(a)] If $f_1,\dots,f_n$ is a regular sequence, then $H(S/P)=H(S/\langle x_1^{a_1},\dots,x_n^{a_n}\rangle)$.
  \item[(b)] $f_1,\dots,f_n$ is a regular sequence if and only if the following condition holds:\\ if $g_1f_1+\cdots +g_nf_n=0$ for some $g_1,\dots,g_n\in S$, then $g_1,\dots,g_n\in P$.
  \item[(c)] If $f_1,\dots,f_n$ is a regular sequence and $\sigma\in S_n$ is a permutation, then\\ $f_{\sigma(1)},\dots,f_{\sigma(n)}$ is a regular sequence.
\end{itemize}
\end{lem}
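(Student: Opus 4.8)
The three statements are classical, so the plan is to derive (a) and the forward direction of (b) from the Koszul complex, to obtain (c) as a purely formal consequence of (b), and to invoke the theory of quasi-regular sequences for the reverse direction of (b).

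For (a): when $f_1,\dots,f_n$ is a regular sequence, the Koszul complex on $f_1,\dots,f_n$ is a graded free resolution of $S/P$, whose $p$-th term is $\bigoplus_{|J|=p}S\!\left(-\sum_{j\in J}a_j\right)$. Taking the alternating sum of Hilbert series along this resolution gives
\[
H_{S/P}(t)=\frac{\prod_{i=1}^n(1-t^{a_i})}{(1-t)^n}.
\]
One checks directly that $x_1^{a_1},\dots,x_n^{a_n}$ is also a regular sequence (modulo the first $i-1$ powers, $S$ is a free $k[x_i,\dots,x_n]$-module on the monomials $x_1^{b_1}\cdots x_{i-1}^{b_{i-1}}$ with $b_j<a_j$, and multiplication by $x_i^{a_i}$ is injective on it), so $S/\langle x_1^{a_1},\dots,x_n^{a_n}\rangle$ has the same Hilbert series, giving (a). Alternatively one may simply quote Exercise 6.2 of \cite{monomial}.

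For the forward implication in (b): if $f_1,\dots,f_n$ is a regular sequence, the Koszul complex is exact except at its zeroth spot, so its first homology vanishes; that is, the module of syzygies of $(f_1,\dots,f_n)$ is generated by the Koszul syzygies $f_ie_j-f_je_i$ with $i<j$. In a combination $\sum_{i<j}c_{ij}(f_ie_j-f_je_i)$ the $k$-th coordinate is $\sum_{i<k}c_{ik}f_i-\sum_{j>k}c_{kj}f_j\in P$, so every relation $\sum g_if_i=0$ has all $g_i\in P$. For the reverse implication one argues by induction on $n$, reducing to the statement that $f_n$ is a nonzerodivisor modulo $\langle f_1,\dots,f_{n-1}\rangle$ and that the shorter sequence still satisfies the hypothesis; this is exactly the passage from a quasi-regular to a regular sequence carried out, in the graded setting, in \cite[Chapter 6]{matsumura}. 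I expect this reverse implication to be the only genuine obstacle; everything else is formal.

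Finally, (c) is immediate from (b): both the set of relations $\{(g_1,\dots,g_n):\sum g_if_i=0\}$ and the ideal $P=\langle f_1,\dots,f_n\rangle$ are unchanged under permuting $f_1,\dots,f_n$, so the condition in (b) holds for $f_{\sigma(1)},\dots,f_{\sigma(n)}$ precisely when it holds for $f_1,\dots,f_n$. Applying (b) in both directions, $f_1,\dots,f_n$ regular implies the condition, which in turn implies $f_{\sigma(1)},\dots,f_{\sigma(n)}$ is regular.
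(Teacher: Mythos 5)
The paper does not actually prove Lemma \ref{3}: it is quoted as a standard fact with a reference to \cite[Chapter 6]{matsumura}, so there is no ``paper proof'' to match. Your sketch is a correct and standard way to supply the missing argument: the Koszul resolution gives the Hilbert series $\prod_i(1-t^{a_i})/(1-t)^n$ for any homogeneous regular sequence, hence (a); vanishing of the first Koszul homology gives the forward direction of (b), since the coordinates of any $S$-combination of the Koszul syzygies $f_ie_j-f_je_i$ lie in $P$; and your deduction of (c) from (b) is clean and exactly right -- the syzygy condition is visibly symmetric in the $f_i$, so permutability comes for free once (b) is known (note this is genuinely where homogeneity of positive degree enters, since permutability fails for arbitrary regular sequences). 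The one caveat is your sketch of the reverse direction of (b): the naive induction you describe does not run as stated, because a relation $\sum_{i<n}g_if_i=0$ only yields $g_i\in P=\langle f_1,\dots,f_n\rangle$ from the hypothesis, not $g_i\in\langle f_1,\dots,f_{n-1}\rangle$, so the truncated sequence does not obviously inherit the hypothesis relative to the smaller ideal. The standard repair is the one you point to: the condition in (b) is equivalent (in each degree, by absorbing $I^2$-terms) to quasi-regularity, and quasi-regular implies regular for homogeneous elements of positive degree by the associated-graded argument in \cite[Chapter 6]{matsumura}; since you correctly isolate this as the only nontrivial step and delegate it to the same source the paper cites for the whole lemma, the proposal is acceptable as it stands.
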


\section{The dimension growth of some ideals containing a reducible regular sequence}
Let $f_1=x_1l_1,\dots,f_n=x_nl_n$ be a regular sequence in $S$, where $l_i\in S_1$ for all $i$. Set $P=\langle f_1,\dots,f_n\rangle$ and $M=\langle x_1^2,\dots,x_n^2\rangle$. Let $V_d$ be a vector space spanned by $P_d$ and square-free monomials $w_1,\dots,w_t$ in $S_d$, and $W_d$ be the vector space spanned by $M_d$ and $w_1,\dots,w_t$. In this section, we prove that $\dim(S_1V_d)=\dim(S_1W_d)$. We also compute $\dim(S_1K_d)$, where $K_d$ is the space generated by $P_d$ and the biggest (in lex order) square-free monomials $v_1,\dots,v_t$ in $S_d$.

For a matrix $A\in M_{n\times n}(k)$, we denote by $A[i_1,\dots,i_r]$ the submatrix of $A$ formed by rows $i_1,\dots,i_r$ and columns $i_1,\dots,i_r$, where $1\leq r\leq n$ and $1\leq i_1<\cdots <i_r\leq n$. We begin with the following lemma, which characterize the structure of $f_1,\dots,f_n$.

\begin{lem}\emph{(Example 3.8 of \cite{chen})}\\\label{4}
Let $f_1=x_1l_1,\dots,f_n=x_nl_n$ be a sequence of homogeneous polynomials in $S$, where $l_i=\sum_{j=1}^{n}a_{ij}x_j$ with $a_{ij}\in k$ and $A$ be the $n\times n$ matrix $(a_{ij})$. Then $f_1,\dots,f_n$ is a regular sequence if and only if $\det A[i_1,\dots,i_r]\neq0$ for all $1\leq r\leq n$ and $1\leq i_1<\cdots <i_r\leq n$.
\end{lem}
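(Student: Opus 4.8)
The plan is to verify both implications by translating the regularity condition, via Lemma \ref{3}(b) together with Lemma \ref{3}(a), into a statement about the determinants of the principal submatrices $A[i_1,\dots,i_r]$.

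For the forward direction, suppose $f_1,\dots,f_n$ is a regular sequence. By Lemma \ref{3}(c) the sequence $f_{i_1},\dots,f_{i_r}$ is again regular for any $1\leq i_1<\cdots<i_r\leq n$, and hence extends to a regular sequence of length $n$; in particular $f_{i_1},\dots,f_{i_r}$ is a regular sequence, so by Lemma \ref{3}(a) applied in the subring $k[x_{i_1},\dots,x_{i_r}]$ — after noting that working modulo the other variables replaces $l_{i_j}$ by its projection $\sum_{s} a_{i_j i_s} x_{i_s}$ — we reduce to proving the case $r=n$: $f_1,\dots,f_n$ regular implies $\det A\neq 0$. If $\det A=0$, pick a nonzero row-vector $(c_1,\dots,c_n)$ with $\sum_i c_i a_{ij}=0$ for all $j$, i.e. $\sum_i c_i l_i=0$. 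Then $\sum_i (c_i x_i) f_i = \bigl(\sum_i c_i x_i\bigr)\cdot\sum_i (x_i l_i)$ — that is not quite a syzygy, so instead I would use the relation $\sum_i c_i l_i = 0$ directly: multiplying by $x_j$ for a suitable $j$ shows $x_j\cdot 0 = \sum_i c_i x_j l_i$, and reorganizing the monomials $x_j x_i$ one obtains a nontrivial relation among the $f_i=x_i l_i$ with coefficients not in $P$, contradicting Lemma \ref{3}(b). (The bookkeeping here — producing coefficients provably outside $P$ — is the delicate point and I expand on it below.)

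For the converse, suppose $\det A[i_1,\dots,i_r]\neq 0$ for all principal submatrices. I would argue by induction on $n$ (equivalently, check Lemma \ref{3}(b) directly). Since $H(S/P)$ and $H(S/\langle x_1^2,\dots,x_n^2\rangle)$ agree in degree $n$ — both vanish there once regularity is known, but a priori we only know $\dim(S/P)_d \geq \dim(S/\langle x_1^2,\dots,x_n^2\rangle)_d$ — the cleanest route is: show $x_1^N,\dots,x_n^N\in\sqrt{P}$ so that $P$ is $\mathfrak m$-primary, hence $\dim_k S/P<\infty$, hence the $n$ forms $f_1,\dots,f_n$ form a system of parameters in the $n$-dimensional Cohen–Macaulay ring $S$, hence a regular sequence. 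To see $x_i\in\sqrt P$: since $\det A\neq 0$ the linear forms $l_1,\dots,l_n$ are a basis of $S_1$, so each $x_j=\sum_i b_{ji} l_i$; then $x_j^2 \cdot(\text{something})$ lies in $P$ because $x_i l_i\in P$ — more carefully, using invertibility of every principal submatrix one shows by descending induction on $|A_u|$ that every square-free monomial $u$ with $|A_u|=r$ satisfies $x_{i}u\in P$ for $i\in A_u$ up to lower-support corrections, giving $x_i^{r}\in P+(\text{monomials of smaller support})$ and ultimately $x_i^n\in P$.

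The main obstacle is the forward direction's syzygy bookkeeping: extracting from $\det A[i_1,\dots,i_r]=0$ an explicit relation $\sum g_i f_i=0$ whose coefficients $g_i$ are demonstrably not all in $P$, so that Lemma \ref{3}(b) gives a contradiction. The natural relation comes from a linear dependence $\sum_{j} c_j l_{i_j}=0$ among the relevant linear forms; multiplying through by the $x_{i_j}$'s yields $\sum_j c_j x_{i_j} l_{i_j} + (\text{cross terms}) $, and the cross terms $c_j x_{i_s} l_{i_j}$ ($s\neq j$) must be rewritten using $x_{i_s} l_{i_j} = x_{i_j} l_{i_s} + (l_{i_j}x_{i_s}-l_{i_s}x_{i_j})$ to isolate the $f_{i_j}=x_{i_j}l_{i_j}$ terms; one must then check that the resulting coefficient of, say, $f_{i_1}$ is a linear form not lying in $P_1 = 0$, i.e. is simply nonzero, which follows from $c\neq 0$. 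Keeping track of which terms land in $P$ and ensuring at least one coefficient is a nonzero linear form (hence outside $P$, as $P$ contains no linear forms since $a_i=2$) is where the care is needed; everything else is formal.
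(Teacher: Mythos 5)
The forward direction of your proposal has a genuine gap, and you flag it yourself as ``the main obstacle'' without resolving it. From $\det A=0$ you take a dependence $\sum_i c_i l_i=0$ and try to manufacture a syzygy $\sum_i g_if_i=0$ with some $g_i\notin P$; but multiplying the dependence by a single variable $x_j$ gives $\sum_i c_i x_j l_i=0$, and the cross terms $x_jl_i$ with $i\neq j$ are simply not multiples of the $f_k=x_kl_k$ -- the identity $x_sl_j=x_jl_s+(l_jx_s-l_sx_j)$ only shuffles them around, so no relation over the generators $f_1,\dots,f_n$ is ever produced, and the claim that ``the coefficient of $f_{i_1}$ is a nonzero linear form'' has nothing to attach to. The same looseness appears in your reduction to the case $r=n$: a regular sequence does not in general stay regular after modding out extra variables (e.g.\ $x_1x_2$ dies in $k[x_1,x_2]/\langle x_2\rangle$), so ``working modulo the other variables replaces $l_{i_j}$ by its projection'' needs the specific observation that for an omitted index $j$ the factorization $f_j=x_jl_j$ forces $x_j$ itself to be regular modulo the remaining $f_i$'s (via Lemma \ref{3}(b),(c)), after which one may pass to $S/\langle x_j\rangle$ and induct. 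This is exactly how the paper argues, and it also disposes of the $r=n$ case without any syzygy bookkeeping: regularity of each $f_i=x_il_i$ forces regularity of each $l_i$, so by permutability $l_1,\dots,l_n$ is itself a regular sequence, hence linearly independent, hence $\det A\neq0$. (Alternatively, within your framework: $P\subseteq\langle l_1,\dots,l_n\rangle$, and if $\det A=0$ the right-hand ideal is generated by at most $n-1$ linear forms, so it has height at most $n-1$, contradicting that an ideal generated by a length-$n$ regular sequence has height $n$.)

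Your converse direction is a legitimately different route from the paper's: the paper inducts on $n$, using that all minors of $A[1,\dots,n-1]$ are nonzero to get $f_1,\dots,f_{n-1},x_n$ regular, and then transports $x_n$ to $l_n$ by the coordinate change $\alpha(x_i)=l_i$ (invertible since $\det A\neq0$); you instead propose to show $P$ is $\mathfrak m$-primary and invoke that a homogeneous system of parameters in the Cohen--Macaulay ring $S$ is a regular sequence. That strategy is sound, but your mechanism for getting $x_i^N\in P$ (``descending induction on $|A_u|$ \dots up to lower-support corrections'') is too vague to check as written. The clean way to finish it is geometric: if $p\in V(P)$ and $N=\{i: p_i\neq0\}$ were nonempty, then $l_i(p)=0$ for all $i\in N$, so the vector $(p_j)_{j\in N}$ lies in the kernel of $A[N]$, contradicting $\det A[N]\neq0$; hence $V(P)=\{0\}$ over $\bar k$ and $\sqrt P=\mathfrak m$ (regularity being insensitive to the faithfully flat extension $k\to\bar k$). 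With the forward direction repaired as above and the converse completed along these lines, your proof would be correct, and the converse would be a genuinely different and somewhat more conceptual argument than the paper's induction.
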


\begin{proof}
Assume that $f_1,\dots,f_n$ is regular. We prove that $\det A[i_1,\dots,i_r]\neq0$ for all $1\leq r\leq n$ and $1\leq i_1<\cdots <i_r\leq n$, by induction on $n$, starting with $n=1$. Let $n>1$. Assume that $1\leq i_1<\cdots <i_r\leq n$, where $1\leq r\leq n-1$. Let $j\notin \{i_1,\dots,i_r\}$. Note that $x_jl_j$ is regular modulo an ideal $I$ if and only if both $x_j$ and $l_j$ are regular modulo $I$. By Lemma \ref{3}, $x_j,f_1,\dots,f_{j-1},f_{j+1},\dots,f_{n}$ is a regular sequence. So $\overline{f_1},\dots,\overline{f_{j-1}},\overline{f_{j+1}},\dots,\overline{f_{n}}$ is a regular sequence in $S/\langle x_j\rangle$. By the inductive step we obtain that $\det A[i_1,\dots,i_r]\neq0$. It remains to show that $\det(A)\neq0$. From the permutability property of regular sequences of homogeneous polynomials, we obtain that $l_1,\dots,l_n$ is a regular sequence. So $l_1,\dots,l_n$ is $k$-linearly independent.

Assume now $\det A[i_1,\dots,i_r]\neq0$ for all $1\leq r\leq n$ and $1\leq i_1<\cdots <i_r\leq n$. We prove that $f_1,\dots,f_n$ is a regular sequence by induction on $n$, starting with $n=1$. Let $n>1$. By the inductive step, the sequence $\overline{f_1},\dots,\overline{f_{n-1}}$ is regular in $S/\langle x_n\rangle$. So $f_1,\dots,f_{n-1},x_n$ is a regulae sequence in $S$. It remains to show that $f_1,\dots,f_{n-1},l_n$ is a regular sequence. Since $\det(A)\neq0$, it follows that the $k$-algebra map $\alpha: S\rightarrow S$ defined by $\alpha(x_i)=l_i$, for all $i$, is an isomorphism. By the inductive step, $\alpha^{-1}(f_1),\dots,\alpha^{-1}(f_{n-1}),\alpha^{-1}(l_n)=x_n$ is a regular sequence. So $f_1,\dots,f_{n-1},l_n$ is a regular sequence, as desired.
\end{proof}

The special structure of the regular sequence in \ref{4} implies the following lemma.
\begin{lem}\label{5}
Let $f_1=x_1l_1,\dots,f_n=x_nl_n$ be a regular sequence of homogeneous polynomials in $S$, where $l_i=\sum_{j=1}^{n}a_{ij}x_j$ with $a_{ij}\in k$, and $P=\langle f_1,\dots,f_n\rangle$. If $g\notin P$ is a homogeneous polynomial in $S$, then $$g\equiv h\pmod P$$ where $\deg(h)=\deg(g)$ and $h$ is a $k$-linear combination of square-free monomials.
\end{lem}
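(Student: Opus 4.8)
The plan is to show that the square-free monomials span $S/P$ in each degree, and since $H(S/P) = H(S/M)$ by Lemma \ref{3}(a) (with $M = \langle x_1^2,\dots,x_n^2\rangle$), counting dimensions will show they form a basis, which immediately gives the claim. So the real content is the spanning statement: every monomial $u$ of degree $d$ is congruent modulo $P$ to a $k$-linear combination of square-free monomials of degree $d$. I would prove this by induction on $d$, and within fixed $d$ by a second induction that decreases the number of "repeated" variables in $u$ (say, by induction on $\sum_i \max(e_i - 1, 0)$ where $u = x_1^{e_1}\cdots x_n^{e_n}$, or equivalently on the lex order among the non-square-free monomials).

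The key step is the reduction of a single square: suppose $x_i^2 \mid u$, write $u = x_i^2 u'$. Since $f_i = x_i l_i \in P$ and $l_i = \sum_j a_{ij} x_j$ with $a_{ii} \neq 0$ (by Lemma \ref{4}, taking $r=1$), we have
\begin{equation*}
x_i^2 u' \equiv -\frac{1}{a_{ii}}\left(\sum_{j \neq i} a_{ij} x_i x_j\right) u' \pmod P,
\end{equation*}
obtained by solving $x_i l_i u' \equiv 0$ for the $x_i^2 u'$ term. Each monomial on the right-hand side has strictly smaller $x_i$-exponent than $u$ (it has exponent $1$ in $x_i$ rather than $\geq 2$), though it may have picked up an extra power of some $x_j$. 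I would need to check that the appropriate induction measure strictly decreases: working with fixed degree $d$, replacing $x_i^2 u'$ by $x_i x_j u'$ either reduces the total number of squared variables, or, if it creates a new square at $x_j$, shifts the "defect" to a later variable — so induction on the lex order of $u$ among degree-$d$ monomials (largest first, since $x_i^2 u' >_{\lex} x_i x_j u'$ for $j > i$) is the clean bookkeeping device, provided the reduction only ever moves exponent mass to variables with larger index. That monotonicity is exactly why the triangular structure in Lemma \ref{4} matters.

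The main obstacle I anticipate is precisely this termination argument: a naive reduction could loop (reducing a square at $x_i$ might create a square at $x_j$, whose reduction recreates one at $x_i$). The fix is to always reduce the square at the \emph{smallest} index $i$ with $e_i \geq 2$; then the replacement $x_i^2 u' \rightsquigarrow x_i x_j u'$ with $j > i$ can only increase exponents at indices $> i$, so it strictly decreases $u$ in lex order, and since there are finitely many monomials of degree $d$ the process halts at a $k$-combination of square-free monomials. (When $j < i$ does not occur because $l_i$'s support — after the normalization $p_i = x_i$ made in the introduction — we may and do assume $a_{ij} = 0$ for $j < i$; more carefully, even without that normalization one can argue by first using the smallest-index square and the nonvanishing of $\det A[i_1,\dots,i_r]$ to justify the elimination. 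I would state this cleanly up front.) Finally, once spanning is established, $|{\rm Mon}(S/M)_d| = H(S/M,d) = H(S/P,d)$ forces linear independence, so the representative $h$ is in fact unique, completing the proof.
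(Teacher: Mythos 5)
Your overall strategy (show the square-free monomials span $S/P$ degree by degree; the Hilbert function equality then upgrades this to a basis) is fine and is essentially the content of the paper's Lemmas \ref{5} and \ref{8}, and your single-square reduction $x_i^2u'\equiv -\frac{1}{a_{ii}}\sum_{j\neq i}a_{ij}x_ix_ju' \pmod P$ is correct. The gap is in the termination argument. You cannot ``assume $a_{ij}=0$ for $j<i$'': the normalization made in the introduction only arranges that \emph{one} linear factor of $f_i$ is $x_i$; it imposes no triangularity on the second factors $l_i$, and indeed the matrices $A$ arising here (e.g.\ the all-$\pm1$ example in the paper) are dense, while no further linear change of coordinates can triangularize $A$ without destroying the form $f_i=x_il_i$. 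Without triangularity your measure fails: reducing the square at the smallest index $i$ produces terms $x_ix_ju'$ with $j<i$, which are lex-\emph{larger} than $x_i^2u'$ and can recreate squares at smaller indices, so the rewriting can cycle. Concretely, for $g=x_1^2x_2$ the reduction by $f_1$ produces a multiple of $x_1x_2^2$, whose reduction by $f_2$ reproduces $x_1^2x_2$ with coefficient $a_{12}a_{21}/(a_{11}a_{22})$; one escapes this loop precisely because $\det A[1,2]\neq 0$, not by any monotone decrease.

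This loop-resolution via the minors is exactly what your parenthetical hedge gestures at but never supplies, and it is the heart of the paper's proof. The paper argues by induction on $\deg(g)$ to reduce to a monomial $g$ with $g/x_i$ square-free, and then uses $\det A[j:\,j\in A_g]\neq 0$ to find scalars with $\sum_{j\in A_g}c_jl_j\equiv x_i \pmod{\langle x_j:\,j\notin A_g\rangle}$; multiplying by $g/x_i$, every term $c_jl_j\,g/x_i$ with $j\in A_g$ lies in $P$ (since $x_j\mid g/x_i$), and the leftover $\sum_{j\notin A_g}c_jx_j\,g/x_i$ is already a combination of square-free monomials, so a \emph{single} such step finishes the reduction with no termination issue. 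To repair your write-up you would need to replace the lex-induction by this submatrix argument (or an equivalent solve-the-linear-system step showing the non-square-free monomials can be eliminated simultaneously, using all minors from Lemma \ref{4}); as it stands, the proposed induction does not terminate for general admissible $A$.
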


\begin{proof}
Since $g\notin P$, we have $\deg(g)\leq n$. It is sufficient to prove the lemma when $g\notin P$ is a monomial in $\langle x_1^2,\dots,x_n^2\rangle$ of degree $\leq n$. We prove by induction on $\deg(g)$. The lemma is true when $\deg(g)=2$, since $a_{ii}\neq0$ for all $i$. Let $g$ be a monomial in $\langle x_1^2,\dots,x_n^2\rangle$ of degree $d>2$ and $A$ be the $n\times n$ matrix $(a_{ij})$. By the inductive step, we may assume that $\frac{g}{x_i}$ is a square-free monomial for some $i$. By Lemma \ref{4}, we have $\det A[j:~j\in A_g]\neq0$. So there exist scalars $(c_j)_{j\in A_g}$, such that $\sum_{j\in A_g}c_jl_j\equiv x_i\pmod{\langle x_j:~j\notin A_g\rangle}$. It follows that $x_i=\sum_{j\in A_g}c_jl_j+\sum_{j\notin A_g}c_jx_j$, where $c_j\in k$ for all $j\notin A_g$. Then $g=\sum_{j\in A_g}c_jl_j\frac{g}{x_i}+\sum_{j\notin A_g}c_jx_j\frac{g}{x_i}$. Let $h=\sum_{j\notin A_g}c_jx_j\frac{g}{x_i}$. Note that $h\neq0$ is a $k$-linear combination of square-free monomials of degree $d$. Since $\sum_{j\in A_g}c_jl_j\frac{g}{x_i}\in P$, we obtain that $g\equiv h\pmod P$.
\end{proof}

By the proof of Lemma \ref{5}, we obtain the following.

\begin{rem}\label{12}
Let $P$ be as in Lemma \ref{5} and $0\leq d\leq n$. If $w$ is a square-free monomial in $S_d$ and $q\in S_1$, then $$qw=\widetilde{q}w+\widehat{q}w$$ where $\widetilde{q},\widehat{q}\in S_1$, $\widehat{q}w\in P$ and $\widetilde{q}w$ is a $k$-linear combination of square-free monomials.
\end{rem}

\begin{exa}
Assume that $S=\mathbb{C}[x_1,x_2,x_3]$ and
\begin{align*}
f_1&=x_1^2+x_1x_2+x_1x_3=x_1(x_1+x_2+x_3)\\
f_2&=-x_1x_2+x_2^2+x_2x_3=x_2(-x_1+x_2+x_3)\\
f_3&=-x_1x_3-x_2x_3+x_3^2=x_3(-x_1-x_2+x_3).
\end{align*}

In this case,
$A=\left(\begin{array}{ccc}
1 & 1 & 1\\
-1 & 1 & 1\\
-1 & -1 & 1\end{array}\right)$ is the matrix that defined in Lemma \ref{4}. Since $\det A[i_1,\dots,i_r]\neq0$ for all $1\leq r\leq 3$ and $1\leq i_1<\cdots <i_r\leq 3$, we have that $f_1,f_2,f_3$ is a regular sequence in $S$. Set $P=\langle f_1,f_2,f_3\rangle$ and let $g=x_1^3+x_1^2x_2$. Since $x_1^2\equiv -x_1x_2-x_1x_3 \pmod P$, we have $x_1^3\equiv -x_1^2x_2-x_1^2x_3 \pmod P$. So $g\equiv -x_1^2x_3 \pmod P$. Also, we see that $x_3f_1-x_1f_3=2x_1^{2}x_3+2x_1x_2x_3\in P$. So $g\equiv x_1x_2x_3 \pmod P$ and $x_1x_2x_3\notin \langle x_1^2,x_2^2,x_3^2\rangle$.
\end{exa}

\begin{rem}
Lemma \ref{5} is not true if $f_1,\dots,f_n$ is an arbitrary regular sequence. For example, consider the sequence $$f_1=x_1^2+x_1x_2+x_2^2,~f_2=x_1x_2~ \mathrm{in}~ \mathbb{C}[x_1,x_2].$$ Note that $f_1,f_2$ is a regular sequence $\Leftrightarrow$ $f_1,x_1$ and $f_1,x_2$ are regular sequences $\Leftrightarrow$ $x_1,f_1$ and $x_2,f_1$ are regular sequences $\Leftrightarrow$ $x_2^2$ and $x_1^2$ are regular elements in $\mathbb{C}[x_2]$ and $\mathbb{C}[x_1]$, respectively. So $f_1,f_2$ is a regular sequence. Let $g=x_2^2$. It is easy to show that $g\notin \langle f_1,f_2\rangle$. If $g\equiv ax_1x_2 \pmod{\langle f_1,f_2\rangle}$, for some $a\in \mathbb{C}$, then there exist $c_1,c_2,c_3\in \mathbb{C}$, not all zero, such that $c_1f_1+c_2f_2+c_3(g-ax_1x_2)=0$. But the equation $$c_1x_1^2+(c_1+c_2-ac_3)x_1x_2+(c_1+c_3)x_2^2=0,$$ implies that $c_1=c_2=c_3=0$, a contradiction.

\end{rem}

As a result of Lemma \ref{5}, we obtain the following.

\begin{lem}\label{8}
If $P$ as in Lemma \ref{5}, then the set of all square-free monomials form a $k$-basis of $S/P$.
\end{lem}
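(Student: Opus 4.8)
The plan is to combine the Hilbert-function computation of Lemma \ref{3}(a) with the reduction statement of Lemma \ref{5}. First I would observe that $f_1,\dots,f_n$ is a regular sequence of forms all of degree $2$, so Lemma \ref{3}(a) gives $H(S/P)=H(S/\langle x_1^2,\dots,x_n^2\rangle)$. In particular, for every $d\geq0$ the dimension $\dim_k(S/P)_d$ equals the number of square-free monomials of $S$ of degree $d$ (which is $\binom{n}{d}$, and is $0$ for $d>n$).

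Next I would show that the images in $S/P$ of the square-free monomials of degree $d$ span $(S/P)_d$. Given an arbitrary class in $(S/P)_d$, lift it to a homogeneous polynomial $g\in S_d$. If $g\in P$, the class is zero and hence trivially in the span. Otherwise $g\notin P$, and Lemma \ref{5} provides $h$ with $g\equiv h\pmod P$, where $h$ is homogeneous of degree $d$ and is a $k$-linear combination of square-free monomials. In either case the class of $g$ lies in the $k$-span of the images of the degree-$d$ square-free monomials, so these images span $(S/P)_d$.

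Finally, a spanning set of a finite-dimensional vector space whose cardinality equals the dimension of the space is automatically a basis. By the first step the number of square-free monomials of degree $d$ equals $\dim_k(S/P)_d$, and by the second step their images span $(S/P)_d$; hence they form a $k$-basis of $(S/P)_d$ for every $d$. Taking the union over all $d$ yields that the set of all square-free monomials is a $k$-basis of $S/P$.

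I do not expect a genuine obstacle here, since both ingredients are already available; the only point requiring a little care is the degenerate case $g\in P$, which is not covered by the hypothesis of Lemma \ref{5} but is handled trivially as above.
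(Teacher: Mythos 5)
Your proposal is correct. The spanning half is exactly the paper's: Lemma \ref{5} reduces every nonzero class of $(S/P)_d$ to a combination of square-free monomials of degree $d$, with the trivial case $g\in P$ handled separately. Where you genuinely diverge is the linear-independence half. The paper argues independence directly: it first uses $H(S/P)=H(S/\langle x_1^2,\dots,x_n^2\rangle)$ only in degree $n$ to produce some $f\in S_n\setminus P$ and conclude $x_1\cdots x_n\notin P$, and then, given a hypothetical dependence relation $\sum_{w}a_ww\in P$, picks a term $v$ of minimal degree with $a_v\neq0$ and passes to the quotient $S/\langle x_i:\ i\notin A_v\rangle$ to derive a contradiction in a smaller polynomial ring. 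You instead use the Hilbert-function equality of Lemma \ref{3}(a) in every degree: the number of square-free monomials of degree $d$ equals $\dim_k(S/P)_d=\binom{n}{d}$, so a spanning family of that size is automatically linearly independent, hence a basis. Your route is shorter and avoids the paper's somewhat delicate restriction-to-subrings step (whose contradiction implicitly relies on the analogue of ``$x_1\cdots x_n\notin P$'' in the smaller ring); the paper's route, in exchange, exhibits the independence explicitly and only needs the Hilbert function in the top degree. One small point of hygiene: phrase the counting step for the indexed family of residue classes (a spanning family of $\binom{n}{d}$ vectors in a $\binom{n}{d}$-dimensional space must be independent), so that you do not need to know in advance that distinct square-free monomials have distinct, nonzero images in $S/P$ --- that conclusion then comes out of the argument rather than being assumed.
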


\begin{proof}
Denote by $\mathcal{A}$ the set of all square-free monomials in $S$. Lemma \ref{5} shows that $S/P$ generated by $\mathcal{A}$. Let $w=x_1\cdots x_n$. Assume that $w\in P$. Since $H(S/P)=H(S/\langle x_1^2,\dots,x_n^2\rangle)$, it follows that there is a polynomial $f\in S_n$ such that $f\notin P$. By Lemma \ref{5}, $f\equiv bx_1\cdots x_n \pmod P$, where $0\neq b\in k$. Since $w\in P$, it follows that $f\in P$, a contradiction. So $w\notin P$. Suppose that $\sum_{w\in \mathcal{A}}a_ww\in P$, where $a_w\in k$ and $a_w=0$ for almost all $w\in \mathcal{A}$. Assume that $a_w\neq0$ for some $w$. Let $v\in\mathcal{A}$ be a monomial with minimal degree such that $a_v\neq0$. So $\overline{v}\in \langle \overline{f_i}:~i\in A_{v}\rangle$ in the ring $S/\langle x_i:~i\notin A_{v}\rangle$, a contradiction.
\end{proof}

\begin{lem}\label{13}
Let $P$ be as in Lemma \ref{5}. If $w$ is a square-free monomial in $S_d$, where $0\leq d\leq n$, then
\begin{itemize}
  \item[(a)] $|S_1(w)\cap P_{d+1}|=d$.
  \item[(b)] $|S_1(w)\cap (P_{d+1}+S_1(w_1,\dots,w_t))|=|S_1(w)\cap P_{d+1}|+|S_1(w)\cap S_1(w_1,\dots,w_t)|$ for every square-free monomials $w_1,\dots,w_t$ of degrees $d$ such that $w_i\neq w$ for all $1\leq i\leq t$.
\end{itemize}
\end{lem}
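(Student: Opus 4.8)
The plan is to move the whole count into $S/P$, using that by Lemma \ref{8} the square-free monomials form a $k$-basis of $S/P$, and that by the proof of Lemma \ref{5} (cf. Remark \ref{12}) multiplying a square-free monomial by a variable gives, modulo $P$, a combination of square-free monomials with supports under control. Write $U=S_1(w)$, so $|U|=n$ with monomial basis $x_1w,\dots,x_nw$, and split $U=U^{\mathrm{sf}}\oplus U^{\mathrm{nsf}}$, where $U^{\mathrm{sf}}=\Span\{x_jw:~j\notin A_w\}$ is the span of the square-free monomials of degree $d+1$ divisible by $w$, and $U^{\mathrm{nsf}}=\Span\{x_iw:~i\in A_w\}$. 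For $i\in A_w$, the argument in the proof of Lemma \ref{5} writes $x_i=\sum_{j\in A_w}c_jl_j+\sum_{j\notin A_w}c_jx_j$ with $c_j\in k$; multiplying by $w$ and noting $l_jw=f_j(w/x_j)\in P$ for $j\in A_w$ gives $x_iw\equiv\sum_{j\notin A_w}c_jx_jw\pmod P$. Hence, with $\pi\colon S\to S/P$ the quotient map, $\pi(U)=\Span\{\overline{x_jw}:~j\notin A_w\}$, a span of $n-|A_w|=n-d$ square-free monomials that are linearly independent in $S/P$ by Lemma \ref{8}. Since $\ker(\pi|_U)=U\cap P_{d+1}$, part (a) follows at once: $|S_1(w)\cap P_{d+1}|=|U|-|\pi(U)|=n-(n-d)=d$.

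For (b) put $R=S_1(w_1,\dots,w_t)$ and restrict $\pi$ to $U\cap(P_{d+1}+R)$. Its kernel is $U\cap(P_{d+1}+R)\cap P=U\cap P_{d+1}$, since a degree $d+1$ homogeneous element of $P$ already lies in $P_{d+1}$; and its image is $\pi(U)\cap\pi(R)$, because $u=p+r$ with $p\in P_{d+1}$, $r\in R$ gives $\pi(u)=\pi(r)$, while conversely $\pi(u)=\pi(r)$ with $u\in U$, $r\in R$ forces $u-r\in P_{d+1}$, so $u=r+(u-r)\in U\cap(R+P_{d+1})$. Therefore $|U\cap(P_{d+1}+R)|=|U\cap P_{d+1}|+|\pi(U)\cap\pi(R)|$, and it suffices to prove $|\pi(U)\cap\pi(R)|=|U\cap R|$.

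I would show both of these equal the number $N$ of square-free monomials $m\in S_{d+1}$ divisible by $w$ and by some $w_i$. First, running the computation of the first paragraph for each $w_i$ in place of $w$ gives $\pi(R)=\Span\{\overline m:~m\in S_{d+1}\text{ square-free, and }w_i\mid m\text{ for some }i\}$, while $\pi(U)=\Span\{\overline m:~m\in S_{d+1}\text{ square-free and }w\mid m\}$; since both are spanned by subsets of the square-free-monomial basis of $(S/P)_{d+1}$ (Lemma \ref{8}), their intersection is the span of the common ones, so $|\pi(U)\cap\pi(R)|=N$. Second, split $R=R^{\mathrm{sf}}\oplus R^{\mathrm{nsf}}$ into the spans of its square-free and non-square-free monomial generators, as was done for $U$. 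Writing $u\in U\cap R$ as $a+b$ with $a\in U^{\mathrm{sf}}$, $b\in U^{\mathrm{nsf}}$ and also as $c+d$ with $c\in R^{\mathrm{sf}}$, $d\in R^{\mathrm{nsf}}$, linear independence of distinct monomials forces $a=c$ and $b=d$; hence $b\in U^{\mathrm{nsf}}\cap R^{\mathrm{nsf}}$, which is $\{0\}$ because a monomial $x_aw$ with $a\in A_w$ can equal $x_bw_i$ with $b\in A_{w_i}$ only if $w=w_i$ (from $x_a^2\mid x_aw$ one gets $b=a$ and $x_a\mid w_i$, whence $x_aw=x_aw_i$), contradicting $w_i\neq w$. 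So $U\cap R=U^{\mathrm{sf}}\cap R^{\mathrm{sf}}$, the span of the square-free monomials of degree $d+1$ divisible by $w$ and by some $w_i$, giving $|U\cap R|=N$ and hence (b).

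The only step that is not pure linear algebra—and the one place where $w_i\neq w$ is used—is the identification $U\cap R=U^{\mathrm{sf}}\cap R^{\mathrm{sf}}$, i.e. that no genuinely non-square-free vector can survive in $U\cap R$. Once Lemmas \ref{5} and \ref{8} are in hand, everything else is manipulation of subspaces spanned by subsets of a fixed monomial basis, together with the rank--nullity identity for $\pi$.
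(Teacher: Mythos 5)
Your argument is correct, but it follows a genuinely different route from the paper's. The paper proves (a) by working with the basis $l_1,\dots,l_n$ of $S_1$: writing $q=\sum_i c_il_i$ with $qw\in P_{d+1}$, it rules out $c_j\neq0$ for $j\notin A_w$ by multiplying by $\prod_{j\neq k\notin A_w}x_k$ and invoking Lemma \ref{3}(b) and Lemma \ref{8} in a quotient ring, thereby identifying $S_1(w)\cap P_{d+1}$ explicitly as $\Span(l_iw:~i\in A_w)$; for (b) it proves the subspace decomposition $S_1(w)\cap(P_{d+1}+R)=S_1(w)\cap P_{d+1}+S_1(w)\cap R$ via Remark \ref{12}, and separately the disjointness $S_1(w)\cap R\cap P_{d+1}=0$ using part (a), Lemma \ref{4} and reduction modulo variables $x_{i_j}$ with $i_j\in A_{w_j}\setminus A_w$ (the place where $w_j\neq w$ enters). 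You instead push everything into $S/P$: from the square-free basis of Lemma \ref{8} you compute $\pi(S_1(v))$ for square-free $v$ as the span of the square-free multiples of $v$, get (a) by rank--nullity, and get (b) by rank--nullity for $\pi$ restricted to $S_1(w)\cap(P_{d+1}+R)$ together with the counting identity that both $\pi(U)\cap\pi(R)$ and $U\cap R$ have dimension equal to the number of common square-free multiples of $w$ and of some $w_i$; your elimination of $U^{\mathrm{nsf}}\cap R^{\mathrm{nsf}}$ is where $w_i\neq w$ is used, replacing the paper's reduction-mod-variables step. What each buys: the paper's proof exhibits the intersection $S_1(w)\cap P_{d+1}$ concretely and proves the stronger statements (an actual sum decomposition and vanishing of the triple intersection), while yours is a cleaner basis-bookkeeping argument that never needs the explicit description and in passing pins down all the dimensions (e.g.\ $|S_1(w)\cap(P_{d+1}+R)|=d+N$); both rest on the same two inputs, Lemma \ref{8} and the invertibility of $A[A_w]$ coming from Lemma \ref{4}.
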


\begin{proof}
(a). Let $q=\sum_{i=1}^{n}c_il_i\in S_1$, where $c_i\in k$ for all $i$, such that $qw\in P_{d+1}$. Assume that $c_j\neq0$ for some $j\notin A_w$. Since $qw\prod_{j\neq k\notin A_w}x_k\in P$, it follows that $c_jl_jw\prod_{j\neq k\notin A_w}x_k\in P$. Thus, $c_jl_jw\prod_{j\neq k\notin A_w}x_k=h_1f_1+\dots+h_nf_n$, where $h_i\in S$ for all $1\leq i\leq n$. So $$h_1f_1+\dots+h_{j-1}f_{j-1}+(x_jh_j-c_jw\prod_{j\neq k\notin A_w}x_k)l_j+h_{j+1}f_{j+1}+\dots+h_nf_n=0,$$ which implies that $$x_jh_j-c_jw\prod_{j\neq k\notin A_w}x_k\in \langle f_1,\dots,f_{j-1},f_{j+1},\dots,f_n\rangle.$$ So $\overline{w\prod_{j\neq k\notin A_w}x_k}\in \langle \overline{f_1},\dots,\overline{f_{j-1}},\overline{f_{j+1}},\dots,\overline{f_n}\rangle$ in the ring $S/\langle x_j\rangle$, a contradiction to Lemma \ref{8}. It follows that $q$ belong to the $k$-vector space $(l_i:~i\in A_w)$. On the other hand, $l_iw\in P$, for all $i\in A_w$. So $|S_1(w)\cap P_{d+1}|=\dim (l_iw:~i\in A_w)=d$.

(b). First, we show that $$S_1(w)\cap (P_{d+1}+S_1(w_1,\dots,w_t))=S_1(w)\cap P_{d+1}+S_1(w)\cap S_1(w_1,\dots,w_t).$$ Assume that $qw\in P_{d+1}+S_1(w_1,\dots,w_t)$, where $q\in S_1$. There exist $f\in S_1(w_1,\dots,w_t)$ and $g\in P_{d+1}$ such that $qw=g+f$. If $f\in P$, then $qw\in S_1(w)\cap P_{d+1}$. So assume that $f\notin P$. By \ref{12}, we may assume that $f$ is a $k$-linear combination of square-free monomials. Also, we obtain that $qw=\widetilde{q}w+\widehat{q}w$, where $\widetilde{q},\widehat{q}\in S_1$, $\widehat{q}w\in P$ and $\widetilde{q}w$ is a $k$-linear combination of square-free monomials. So $\widetilde{q}w-f\in P$, which implies that $\widetilde{q}w=f\in S_1(w_1,\dots,w_t)$. Hence $qw\in S_1(w)\cap P_{d+1}+S_1(w)\cap S_1(w_1,\dots,w_t)$ and we obtain that the desired equality.\\ It remains to show that $$S_1(w)\cap S_1(w_1,\dots,w_t)\cap P_{d+1}=(0).$$ Let $qw\in S_1(w_1,\dots,w_t)\cap P_{d+1}$, where $q\in S_1$. By (a), we have $q=\sum_{j\in A_w}c_jl_j$, where $c_j\in k$ for all $j\in A_w$. For every $1\leq j\leq t$, let $i_j\in A_{w_j}\setminus A_w$ and let $B=\{i_j:~1\leq j\leq t\}$. By the hypothesis, we obtain that $qw=\sum_{i=1}^tq_iw_i$, where $q_i\in S_1$ for all $1\leq i\leq t$. So $\overline{qw}=\overline{0}$ in the ring $S/\langle x_j:~j\in B\rangle$, which implies that $\sum_{j\in A_w}\overline{c_jl_j}=\overline{0}$. By \ref{4}, we obtain that $c_j=0$, for all $j\in A_w$. Thus, $qw=0$.

\end{proof}

\begin{rem}
Part (b) of Lemma \ref{13} is not true if we replace $w,w_1,\dots,w_t$ by homogeneous polynomials which are a $k$-linear combination of square-free monomials in $S_d$. For example, let $S=k[x_1,x_2,x_3,x_4]$ and $P=\langle x_1^2,x_2^2,x_3^2,x_4^2\rangle$. Suppose that $h=x_1x_2+x_2x_4+x_3x_4$ and $h_1=x_1x_2+x_1x_3$. Computation with Macaulay2 shows that \begin{center}$|S_1(h)\cap(P_{3}+S_1(h_1))|=2$ and $|S_1(h)\cap P_{3}|=|S_1(h)\cap S_1(h_1))|=0$.\end{center}
\end{rem}

In the case that $w$ is a homogeneous polynomial in part (a) of Lemma \ref{13}, the dimension is always bounded by the degree. This is a result of the following proposition.

\begin{prop}
Let $P$ be as in Lemma \ref{5}. If $g\notin P$ is a homogeneous polynomial of degree $d$, then $|S_1(g)\cap P_{d+1}|\leq d$.
\end{prop}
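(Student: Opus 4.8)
The plan is to reduce to square-free polynomials via Lemma~\ref{5} and then induct on $n$, splitting off one variable. Since $g\neq 0$, multiplication by $g$ embeds $S_1$ in $S_{d+1}$, and $P_1=0$, so with $R=S/P$ we have $|S_1(g)\cap P_{d+1}|=\dim_k\{q\in R_1:q\bar g=0\}$. By Lemma~\ref{5} I would replace $g$ by $h=\sum_{j=1}^{t}c_jw_j$, a nonzero linear combination of distinct square-free monomials of degree $d$ with all $c_j\neq 0$ (nonzero since $g\notin P$), so the statement becomes $\dim_k\{q\in R_1:q\bar h=0\}\le d$. I induct on $n$; for $n=1$, $P=\langle x_1^2\rangle$ up to a scalar and the claim is immediate. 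For $n>1$ set $T=\bigcap_{j}A_{w_j}$, the set of indices dividing every $w_j$.

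Suppose first $T\neq\emptyset$, and pick $i\in T$, so $h=x_ih_1$ with $h_1$ a nonzero square-free combination of degree $d-1$ not involving $x_i$. Using Lemma~\ref{3}(c) together with Lemma~\ref{4} applied to the principal submatrix of $A$ omitting row and column $i$, one checks that $x_i$ is a nonzerodivisor modulo $\langle f_m:m\neq i\rangle$, whence $(P:x_i)=\langle l_i,\,f_m:m\neq i\rangle=:\widetilde P$ and $\{q\in S_1:qh\in P\}=\{q\in S_1:qh_1\in\widetilde P\}$. Since $\widetilde P_1=kl_i$ and $l_ih_1\in\widetilde P$, this space has dimension $1+\dim_k\{q\in(S/\widetilde P)_1:q\bar h_1=0\}$. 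Working modulo $l_i$ (i.e.\ substituting $x_i=-a_{ii}^{-1}\sum_{j\neq i}a_{ij}x_j$, legitimate as $a_{ii}\neq 0$) presents $S/\widetilde P$ as a split complete intersection in the $n-1$ remaining variables whose coefficient matrix is the Schur complement $\widetilde A$ of $a_{ii}$ in $A$; its principal minors are all nonzero because $\det\widetilde A[U]=\det A[U\cup\{i\}]/a_{ii}$, so Lemmas~\ref{4} and~\ref{8} continue to apply and $\bar h_1\neq 0$ in $S/\widetilde P$. Induction then gives $\dim_k\{q:q\bar h_1=0\}\le d-1$, hence $\dim_k\{q\in R_1:q\bar h=0\}\le d$.

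Suppose next $T=\emptyset$; then $d<n$, since $d=n$ would force $h$ a scalar multiple of $x_1\cdots x_n$ and $T=\{1,\dots,n\}$. As $R$ is a graded Artinian complete intersection, it is Gorenstein with socle $R_n$, so a nonzero $\bar h$ of degree $<n$ satisfies $\bar x_i\bar h\neq 0$ for some $i$, which we fix. By Lemma~\ref{8} (for $R$ and for $S/\langle P,x_i\rangle$) the ideal $\bar x_iR$ is the span of the square-free monomials divisible by $x_i$; since $i\notin T$ some $w_j$ is not divisible by $x_i$, so $\bar h\notin\bar x_iR$. One has $\bar x_iR=\{\xi\in R:\bar l_i\xi=0\}$ and $\bar l_iR=\{\xi\in R:\bar x_i\xi=0\}$ (from $(P:l_i)=\langle P,x_i\rangle$ and $(P:x_i)=\langle P,l_i\rangle$, established as in the previous case), so $\bar l_i\bar h\neq 0$ and $\bar h\notin\bar l_iR$. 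Pass to $R^{(i)}:=R/\bar l_iR=S/\langle P,l_i\rangle$, again a split complete intersection in $n-1$ variables (Schur-complement matrix), whose kernel $\bar l_iR$ has degree-one part $k\bar l_i$. The image $\bar{\bar h}$ of $\bar h$ in $R^{(i)}$ is nonzero, so by induction $\dim_k\{q\in R^{(i)}_1:q\bar{\bar h}=0\}\le d$; moreover the natural map $\{q\in R_1:q\bar h=0\}\to R^{(i)}_1$ lands in $\{q:q\bar{\bar h}=0\}$ and has kernel $\{q\in R_1:q\bar h=0\}\cap k\bar l_i$, which is $0$ because $\bar l_i\bar h\neq 0$. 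Therefore $\dim_k\{q\in R_1:q\bar h=0\}\le d$.

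The point I expect to require the most care is keeping the bound sharp across the induction, which is exactly why the two cases must be separated: when $T\neq\emptyset$ the degree of the relevant element drops honestly to $d-1$ and the extra $+1$ is precisely $\dim_k\widetilde P_1$, whereas when $T=\emptyset$ the degree is unchanged but no loss occurs since $\bar l_i\bar h\neq 0$. The supporting facts to verify carefully are (i) that $(P:x_i)=\langle P,l_i\rangle$ and $\langle P,x_i\rangle$ are again split complete intersections in one fewer variable — with Schur-complement, respectively principal-submatrix, coefficient matrices — so that all principal minors stay nonzero and Lemmas~\ref{4} and~\ref{8} keep applying; and (ii) the identification of $\bar x_iR$ with the span of the square-free monomials divisible by $x_i$. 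In the case $T=\emptyset$, the existence of an index $i$ with $\bar x_i\bar h\neq 0$ uses only that $R$ is Gorenstein with socle concentrated in degree $n$.
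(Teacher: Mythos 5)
Your proof is correct, and it shares the paper's skeleton: replace $g$ by its square-free normal form $h$ via Lemma \ref{5}, interpret $|S_1(g)\cap P_{d+1}|$ as the dimension of the degree-one annihilator of $\bar h$ in $R=S/P$, and induct by passing to a split complete intersection in one fewer variable, with a case split governed by divisibility of $h$ by a suitable variable. The execution differs in the second case, though. The paper fixes $x_i$ with $x_ih\notin P_{d+1}$ (found elementarily from $l_jh\notin P$ for $j\notin A_{w_1}$) and splits on whether $\bar h$ vanishes modulo $\langle x_i\rangle+P$: if not, it works in $S/\langle x_i\rangle$ and traps $S_1(h)\cap P_{d+1}$ inside the span of $p_1h,\dots,p_sh,x_ih$; if so, $h=x_ih_1$ and it works in $S/\langle l_i\rangle$. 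Your case $T\neq\emptyset$ is essentially the paper's latter case, cleaned up through the colon identity $(P:x_i)=\langle P,l_i\rangle=\widetilde P$, which converts the count into $1+\dim_k\{q\in(S/\widetilde P)_1:q\bar h_1=0\}$. Your case $T=\emptyset$ is genuinely different: instead of reducing modulo the variable $x_i$ you reduce modulo the form $l_i$, using $(P:l_i)=\langle P,x_i\rangle$, the identification of $\bar x_iR$ with the span of the square-free monomials divisible by $x_i$, and the socle of the Artinian complete intersection to produce $i$ with $\bar x_i\bar h\neq 0$ (hence $\bar h\notin\bar l_iR$), while $T=\emptyset$ gives $\bar l_i\bar h\neq0$ for every $i$, so the injection into $R^{(i)}_1$ loses nothing. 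What your route buys: induction on $n$ alone (the paper sets up a double induction on $n$ and $d$), tidier bookkeeping via annihilators and colon ideals, and an explicit check --- the Schur complement identity $\det\widetilde A[U]=\det A[U\cup\{i\}]/a_{ii}$ --- that $S/\langle P,l_i\rangle$ and $S/\langle P,x_i\rangle$ are again split complete intersections to which Lemmas \ref{4} and \ref{8} apply, a point the paper's appeal to ``the inductive step'' leaves implicit. The price is reliance on standard facts not used in the paper (Gorenstein socle of a graded Artinian complete intersection, Schur complements), where the paper stays entirely elementary; both of your flagged supporting facts (i) and (ii) are indeed true and verify routinely, so I see no gap.
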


\begin{proof}
We prove by induction on $n$. If $n=1$, then $g=ax_1$ or $g\in k$, where $a\in k$. If $g\in k$, then $|S_1(g)\cap P_{1}|=0$ and if $g=ax_1$, then $|S_1(g)\cap P_{2}|=1$. Let $n>1$. We prove by induction on $d$, starting with $d=0$. Let $d>0$. If $d=n$, then $P_{d+1}=S_{d+1}$ and so $|S_1(g)\cap P_{d+1}|=n$. Assume that $d<n$. By \ref{5}, there exists a $k$-linear combination of square-free monomials $h\in S_d$ such that $g\equiv h\pmod{P_{d}}$. Clearly, $S_1(h)\cap P_{d+1}=S_1(g)\cap P_{d+1}$. Let $h=\sum_{i=1}^{t}a_iw_i$, where $0\neq a_i\in k$ and $w_i\in \Mon(S_d)$ for all $i$. Let $j\notin A_{w_1}$. If $l_jh\in P_{d+1}$, then $\overline{l_jw_1}\in \overline{P_{d+1}}$ in the ring $S/\langle x_i:~i\notin A_w \wedge i\neq j\rangle$, a contradiction. So $l_jh\notin P_{d+1}$ for all $j\notin A_{w_1}$. In particular, there exists a variable $x_i$ such that $x_ih\notin P_{d+1}$. We have two cases:

\textbf{Case 1.} $\overline{h}\notin \overline{P_{d}}$ in the ring $S/\langle x_i\rangle$. Let $\overline{p_1h},\dots,\overline{p_sh}$ be a basis of $\overline{S_1}(\overline{h})\cap \overline{P_{d+1}}$ in the ring $S/\langle x_i\rangle$. By the inductive step, we obtain that $s\leq d$. If $f\in S_1(h)\cap P_{d+1}$, then $f\in (p_1h,\dots,p_sh,x_iq)$, where $q\in S_d$. Since $f\in S_1(h)$, it follows that $x_iq=rh$, where $r\in S_1$. Since $x_i\nmid h$, it follows that $x_i|r$. So $f\in (p_1h,\dots,p_sh,x_ih)$. Therefore, $S_1(h)\cap P_{d+1}\subseteq (p_1h,\dots,p_sh,x_ih)$. If $|S_1(h)\cap P_{d+1}|=s+1$, then $x_ih\in P_{d+1}$, a contradiction.

\textbf{Case 2.} $\overline{h}\in \overline{P_{d}}$ in the ring $S/\langle x_i\rangle$. So $h\equiv x_iq \pmod{P_d}$, where $q\in S_{d-1}$. Since $h$ is the unique $k$-linear combination of square-free monomials such that $x_iq\equiv h \pmod P$, we obtain that $h=x_ih_1$, where $h_1\in S_{d-1}$. If $f\in S_1(h)\cap P_{d+1}$, then $f=px_ih_1$, for some $p\in S_1$. Clearly, $\frac{f}{x_i}\in S_1(h_1)$. Since $f\in P$, it follows that $\overline{ph_1}\in \overline{P_{d}}$ in the ring $S/\langle l_i\rangle$. So $\overline{\frac{f}{x_i}}\in \overline{S_1}(\overline{h_1})\cap \overline{P_{d}}$ in $S/\langle l_i\rangle$. If $\overline{h_1}\in\overline{P_{d-1}}$ in $S/\langle l_i\rangle$, then $x_ih_1=h\in P$, a contradiction. Let $\overline{p_1h_1},\dots,\overline{p_sh_1}$ be a basis of $\overline{S_1}(\overline{h_1})\cap \overline{P_{d}}$. By the inductive step, we obtain that $s\leq d-1$. So $\frac{f}{x_i}\in (p_1h_1,\dots,p_sh_1,l_iq)$, which implies that $f\in (p_1h,\dots,p_sh,l_ix_iq)$. Therefore, $|S_1(h)\cap P_{d+1}|\leq s+1\leq d$.
\end{proof}
Now, we prove the main results of this section.

\begin{thm}
Let $P$ be as in Lemma \ref{5} and $M=\langle x_1^2,\dots,x_n^2\rangle$. Assume that $V=P_{d}+(w_1,\dots,w_t)$ and $W=M_{d}+(w_1,\dots,w_t)$, where $w_i$ is a square-free monomial of degree $d$, for all $i$. Then $$\dim S_1W=\dim S_1V.$$
\end{thm}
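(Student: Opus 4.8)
The plan is to induct on the number $t$ of square-free monomials, feeding each step into Lemma \ref{13}.

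First I would dispose of the degenerate range $d\le 1$: there $P_d=M_d=0$ (both ideals being proper and generated in degree $2$), so $V=W=(w_1,\dots,w_t)$ and there is nothing to prove. Henceforth assume $d\ge 2$. In that range $S_1P_d=P_{d+1}$ and $S_1M_d=M_{d+1}$ because $P$ and $M$ are generated in degree $2$, so $S_1V=P_{d+1}+S_1(w_1,\dots,w_t)$ and $S_1W=M_{d+1}+S_1(w_1,\dots,w_t)$. The observation that makes the induction run is that $x_1^2,\dots,x_n^2$ is itself a regular sequence of the shape treated in Lemma \ref{5}: taking $l_i=x_i$, the matrix $A$ of Lemma \ref{4} is the identity, all of its submatrices $A[i_1,\dots,i_r]$ have determinant $1$, and so Lemma \ref{13} applies verbatim with $P$ replaced by $M$.

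Then I would carry out the induction on $t$, after first deleting any repeated $w_i$ (which changes neither $V$ nor $W$) so that $w_1,\dots,w_t$ are pairwise distinct. The base case $t=0$ is immediate: $S_1V=P_{d+1}$, $S_1W=M_{d+1}$, and $\dim P_{d+1}=\dim M_{d+1}$ by Lemma \ref{3}(a). For the inductive step put $V'=P_d+(w_1,\dots,w_{t-1})$ and $W'=M_d+(w_1,\dots,w_{t-1})$, so that $S_1V=S_1V'+S_1(w_t)$ and $S_1W=S_1W'+S_1(w_t)$. Applying $\dim(X+Y)=\dim X+\dim Y-\dim(X\cap Y)$ to both, it suffices to show $\dim(S_1V'\cap S_1(w_t))=\dim(S_1W'\cap S_1(w_t))$, since $\dim S_1V'=\dim S_1W'$ by the inductive hypothesis. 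Now $S_1V'\cap S_1(w_t)=S_1(w_t)\cap\bigl(P_{d+1}+S_1(w_1,\dots,w_{t-1})\bigr)$, so Lemma \ref{13}(b) (whose hypotheses hold because $w_t\neq w_i$ for $i<t$) followed by Lemma \ref{13}(a) evaluates this dimension as $d+\dim\bigl(S_1(w_t)\cap S_1(w_1,\dots,w_{t-1})\bigr)$; the $M$-version of Lemma \ref{13} yields exactly the same value for $\dim(S_1W'\cap S_1(w_t))$, and we are done.

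I do not expect any real obstacle beyond Lemma \ref{13} itself, which is already in hand; everything else is bookkeeping. The two points that require care are that repetitions among the $w_i$ must be removed before invoking Lemma \ref{13}(b), and that the identity $S_1P_d=P_{d+1}$ (needed to match $S_1V'$ to the hypotheses of Lemma \ref{13}) holds only for $d\ge 2$ — which is exactly why the cases $d\le 1$, where in any event $V=W$, are split off at the start.
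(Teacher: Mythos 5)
Your proposal is correct and follows essentially the same route as the paper: induction on $t$, inclusion–exclusion on $S_1V = S_1V_1 + S_1(w_t)$, and Lemma \ref{13}(a),(b) applied to both $P$ and $M$ (the latter being of the form in Lemma \ref{5} with $l_i=x_i$). Your extra remarks — splitting off $d\le 1$, removing repeated $w_i$, and noting $S_1P_d=P_{d+1}$ for $d\ge 2$ — are points the paper leaves implicit, but they do not change the argument.
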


\begin{proof}
We may assume that $d\geq2$ and prove by induction on $t$. If $t=1$, then
\begin{align*}
\dim S_1W&=\dim M_{d+1}+\dim S_1(w_1)-\dim S_1(w_1)\cap M_{d+1}\\
&=\dim P_{d+1}+\dim S_1(w_1)-\dim S_1(w_1)\cap P_{d+1}=\dim S_1V.
\end{align*}
Let $t>1$, and set $W_1=M_{d}+(w_1,\dots,w_{t-1})$, $V_1=P_{d}+(w_1,\dots,w_{t-1})$ and $Z=S_1(w_t)\cap S_1(w_1,\dots,w_{t-1})$. By Lemma (\ref{13}) and the inductive step, we have
\begin{align*}
\dim S_1W&= \dim S_1W_1+\dim S_1(w_t)-\dim S_1(w_t)\cap S_1W_1\\
&=\dim S_1V_1+\dim S_1(w_t)-\dim S_1(w_t)\cap S_1W_1\\
&=\dim S_1V_1+\dim S_1(w_t)-\dim S_1(w_t)\cap M_{d+1}-\dim Z\\
&=\dim S_1V_1+\dim S_1(w_t)-\dim S_1(w_t)\cap P_{d+1}-\dim Z\\
&=\dim S_1V_1+\dim S_1(w_t)-\dim S_1(w_t)\cap S_1V_1\\
&=\dim S_1V.
\end{align*}
\end{proof}

\begin{prop}
Let $P$ be as in Lemma \ref{5} and $V=P_{d}+(w_1,\dots,w_t)$ be the $k$-vector space spanned by $P_{d}$ and the $t$ biggest (in lex order) square-free monomials in $S_d$. Then $$\dim S_1V={d+n \choose d+1}-{n\choose d+1}+\sum_{i=1}^{t}(n-m(w_i)),$$ where $m(w_i)=\max\{j:~x_j|w_i\}$, $1\leq i\leq t$.
\end{prop}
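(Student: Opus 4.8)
The plan is to reduce the whole computation to a count of monomials. First I would invoke the preceding theorem to replace $P$ by $M=\langle x_1^2,\dots,x_n^2\rangle$: it gives $\dim S_1V=\dim S_1W$ with $W=M_d+(w_1,\dots,w_t)$, so it is enough to compute $\dim S_1W$. Since the square-free monomials of degree $e$ form a $k$-basis of $(S/M)_e$ for every $e$, we get $\dim M_{d+1}=\dim S_{d+1}-\binom{n}{d+1}=\binom{d+n}{d+1}-\binom{n}{d+1}$. Now $S_1W=M_{d+1}+S_1(w_1,\dots,w_t)$ is a monomial subspace of $S_{d+1}$ containing $M_{d+1}$, and $S_1W/M_{d+1}$ is spanned by the images of the monomials $x_jw_i$ ($1\le j\le n$, $1\le i\le t$); such an image is $0$ when $x_j\mid w_i$, and otherwise it is the basis vector of $(S/M)_{d+1}$ corresponding to the square-free monomial $x_jw_i$ of degree $d+1$. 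Hence $S_1W/M_{d+1}$ has as a $k$-basis the images of exactly those square-free monomials of degree $d+1$ divisible by some $w_i$, so $\dim S_1W=\binom{d+n}{d+1}-\binom{n}{d+1}+N$, where $N$ is the number of such monomials.

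It would then remain to show $N=\sum_{i=1}^t(n-m(w_i))$; this is the only place the hypothesis that $w_1>_{\lex}\cdots>_{\lex}w_t$ are the $t$ largest square-free monomials of degree $d$ enters. The device I would use: for a square-free monomial $u$ of degree $d+1$, the square-free monomials of degree $d$ dividing $u$ are the $u/x_j$ with $j\in A_u$, and $u/x_{m(u)}$ is the $\lex$-largest of them (for $j<j'$ in $A_u$, $u/x_j$ and $u/x_{j'}$ agree below index $j$ while at index $j$ the latter has exponent $1$ and the former exponent $0$). Because $\{w_1,\dots,w_t\}$ is exactly the set of square-free monomials of degree $d$ that are $\ge_{\lex}w_t$, it follows that $u$ is divisible by some $w_i$ if and only if $u/x_{m(u)}\in\{w_1,\dots,w_t\}$. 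I would then partition the monomials counted by $N$ according to the value of $u/x_{m(u)}$: the fibre over $w_i$ consists of the monomials $u=w_ix_j$ with $j\notin A_{w_i}$ and $m(u)=j$, i.e.\ with $j\in\{m(w_i)+1,\dots,n\}$, which is $n-m(w_i)$ monomials; summing over the disjoint fibres gives $N=\sum_{i=1}^t(n-m(w_i))$.

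Putting the two steps together yields $\dim S_1V=\dim S_1W=\binom{d+n}{d+1}-\binom{n}{d+1}+\sum_{i=1}^t(n-m(w_i))$. The one genuinely substantive point is the second step — the combinatorial evaluation of $N$ — and within it the observation that the $\lex$-largest degree-$d$ square-free divisor of $u$ is $u/x_{m(u)}$, which makes the assignment $u\mapsto w_i$ well defined and the fibres easy to count; the first step is bookkeeping once the preceding theorem is in hand.
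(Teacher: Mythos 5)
Your argument is correct, but it follows a genuinely different route from the paper's. The paper never invokes the preceding theorem: it proves, by induction on $t$ using Lemma \ref{13}(b), the inclusion--exclusion identity
$$|S_1V|=|P_{d+1}|+\sum_{i=1}^t|S_1(w_i)|-\sum_{i=1}^t|S_1(w_i)\cap P_{d+1}|-\sum_{i=2}^{t}|S_1(w_i)\cap S_1(w_1,\dots,w_{i-1})|,$$
and then evaluates each term directly over $P$: $|S_1(w_i)\cap P_{d+1}|=d$ by Lemma \ref{13}(a), and $|S_1(w_i)\cap S_1(w_1,\dots,w_{i-1})|=m(w_i)-d$ because for $j<m(w_i)$ with $x_j\nmid w_i$ the monomial $x_jw_i$ already lies in $S_1(w_1,\dots,w_{i-1})$. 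You instead use the theorem $\dim S_1V=\dim S_1W$ to pass to $M=\langle x_1^2,\dots,x_n^2\rangle$, and then count purely combinatorially the square-free monomials of degree $d+1$ divisible by some $w_i$, fibering them over the lex-largest square-free degree-$d$ divisor $u/x_{m(u)}$; the lexsegment property of $\{w_1,\dots,w_t\}$ makes the fibres exhaust the count and each fibre has exactly $n-m(w_i)$ elements. Your count is in effect the complement of the paper's ``$m(w_i)-d$'' computation, and your route buys a clean separation of the algebraic content (all concentrated in the cited theorem, hence ultimately in Lemma \ref{13}) from the combinatorics, at the cost of being less self-contained than the paper's direct inclusion--exclusion over $P$. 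One shared caveat, not a defect of your argument: your identity $S_1W=M_{d+1}+S_1(w_1,\dots,w_t)$ and the paper's replacement of $|S_1P_d|$ by $|P_{d+1}|$ both need $S_1M_d=M_{d+1}$, respectively $S_1P_d=P_{d+1}$, i.e.\ $d\geq2$ (the assumption made explicitly in the preceding theorem); for $d=1$ and $t<n$ the stated formula itself fails, so this restriction is implicit in the proposition for either proof.
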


\begin{proof}
We claim that
$$|S_1V|=|P_{d+1}|+\sum_{i=1}^t|S_1(w_i)|-\sum_{i=1}^t|S_1(w_i)\cap P_{d+1}|-\sum_{i=2}^{t}|S_1(w_i)\cap S_1(w_1,\dots,w_{i-1})|.$$
We prove the claim by induction on $t$. If $t=1$, then $$|S_1V|=|P_{d+1}|+|S_1(w_1)|-|S_1(w_1)\cap P_{d+1}|.$$ Let $t>1$ and $V_1=P_{d}+(w_1,\dots,w_{t-1})$. By the inductive step we obtain that $|S_1V|$ is equal to
$$|P_{d+1}|+\sum_{i=1}^{t}|S_1(w_i)|-\sum_{i=1}^{t-1}|S_1(w_i)\cap P_{d+1}|-\sum_{i=2}^{t-1}|S_1(w_i)\cap S_1(w_1,\dots,w_{i-1})|-|S_1(w_t)\cap S_1\overline{V}|.$$
By Lemma \ref{13}, we have $|S_1(w_t)\cap S_1V_1|=|S_1(w_t)\cap P_{d+1}|+|S_1(w_t)\cap S_1(w_1,\dots,w_{t-1})|$. We proved the claim.

Let $2\leq j\leq t$. If $i<m(w_j)$ such that $x_i\nmid w_j$, then $x_iw_j\in S_1(w_1,\dots,w_{j-1})$. So $|S_1(w_i)\cap S_1(w_1,\dots,w_{i-1})|=m(w_j)-d$. Therefore
\begin{align*}
|S_1V|&=|S_{d+1}|-{n\choose d+1}+tn-td-\sum_{i=2}^{t}(m(w_i)-d)\\
&={d+n \choose d+1}-{n\choose d+1}+tn-td-\sum_{i=2}^{t}(m(w_i)-d)\\
&={d+n \choose d+1}-{n\choose d+1}+tn-td-\sum_{i=1}^{t}(m(w_i)-d)\\
&={d+n \choose d+1}-{n\choose d+1}+\sum_{i=1}^{t}(n-m(w_i)).
\end{align*}

\end{proof}

\section{The main result}
In this section we prove that the EGH Conjecture is true if $f_i$ splits into linear factors for all $i$. We begin with the following lemma.

\begin{lem}\label{20}
Let $P=\langle f_1,\dots,f_n\rangle$ be an ideal of $S$ generated by a regular sequence with $\deg(f_i)=a_i$ and $n\geq2$. Assume that $f_n=q_1\cdots q_s$, where $q_1,\dots,q_s\in S_1$. Then
\begin{itemize}
  \item[(a)] $H(S/P+\langle q_m\rangle)=H(S/P+\langle q_k\rangle)$ for all $1\leq m,k\leq s$.
  \item[(b)] $H(S/(P:q_1\cdots q_j)+\langle q_m\rangle)=H(S/(P:q_1\cdots q_j)+\langle q_k\rangle)$ for all $1\leq j\leq s-1$ and $j<m,k\leq s$.
\end{itemize}
\end{lem}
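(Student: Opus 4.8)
The plan is to reduce both statements to the single fact that, for any linear form $q$ dividing $f_n$, the Hilbert function of $\langle f_1,\dots,f_{n-1},q\rangle$ depends only on $n$ and $a_1,\dots,a_{n-1}$, and then to identify the two ideals appearing in (a) and (b) with such an ideal.

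First I would establish that $f_1,\dots,f_{n-1},q$ is a regular sequence in $S$ whenever $q\in S_1$ divides $f_n$. Since $q\mid f_n$ we have $f_n\in\langle q\rangle$, hence $P\subseteq\langle f_1,\dots,f_{n-1},q\rangle$; because $S/P$ is finite dimensional over $k$ (Lemma \ref{3}(a)), so is its quotient $S/\langle f_1,\dots,f_{n-1},q\rangle$. Thus $f_1,\dots,f_{n-1},q$ is a homogeneous system of parameters of the polynomial ring $S$, and as $S$ is Cohen--Macaulay it is a regular sequence (see \cite[Chapter~6]{matsumura}). Lemma \ref{3}(a), applied with the degree vector $(a_1,\dots,a_{n-1},1)$, then gives
$$H\bigl(S/\langle f_1,\dots,f_{n-1},q\rangle\bigr)=H\bigl(S/\langle x_1^{a_1},\dots,x_{n-1}^{a_{n-1}},x_n\rangle\bigr),$$
which is manifestly independent of $q$. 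Moreover, being the last term of a regular sequence, $q$ is a nonzerodivisor modulo $\langle f_1,\dots,f_{n-1}\rangle$; since each $q_i$ divides $f_n$, it follows that every product $q_1\cdots q_j$ is a nonzerodivisor modulo $\langle f_1,\dots,f_{n-1}\rangle$.

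For (a), $f_n=q_1\cdots q_s\in\langle q_m\rangle$ gives $P+\langle q_m\rangle=\langle f_1,\dots,f_{n-1},q_m\rangle$, so the displayed formula settles it. For (b), fix $1\le j\le s-1$ and $j<m\le s$; I claim $(P:q_1\cdots q_j)+\langle q_m\rangle=\langle f_1,\dots,f_{n-1},q_m\rangle$. The inclusion $\supseteq$ is immediate since $f_iq_1\cdots q_j\in P$ for $1\le i\le n-1$. For $\subseteq$, let $x$ satisfy $xq_1\cdots q_j\in P$ and write $xq_1\cdots q_j=g+h_nf_n$ with $g\in\langle f_1,\dots,f_{n-1}\rangle$ and $h_n\in S$; using $f_n=q_1\cdots q_j\cdot q_{j+1}\cdots q_s$ this says $q_1\cdots q_j\bigl(x-h_nq_{j+1}\cdots q_s\bigr)\in\langle f_1,\dots,f_{n-1}\rangle$, so the nonzerodivisor property forces $x\in\langle f_1,\dots,f_{n-1}\rangle+\langle q_{j+1}\cdots q_s\rangle$; since $j<m\le s$ we have $q_m\mid q_{j+1}\cdots q_s$, whence $x\in\langle f_1,\dots,f_{n-1},q_m\rangle$. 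The displayed formula again shows the Hilbert function is independent of $m$, which proves (b). The only step that needs genuine commutative algebra is "homogeneous system of parameters $\Rightarrow$ regular sequence", i.e. the Cohen--Macaulayness of $S$; if one prefers to avoid it, one may instead change coordinates so that $q=x_n$ and observe directly that $\bar f_1,\dots,\bar f_{n-1}$ generate an Artinian ideal, hence a regular sequence of degrees $a_1,\dots,a_{n-1}$, in the polynomial ring $S/\langle q\rangle$ on $n-1$ variables, and then apply Lemma \ref{3}(a) there. Everything else is bookkeeping with colon ideals together with the fact that a product of nonzerodivisors is a nonzerodivisor.
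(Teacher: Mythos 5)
Your proof is correct and follows essentially the same route as the paper: part (a) comes down to identifying $P+\langle q_m\rangle$ with $\langle f_1,\dots,f_{n-1},q_m\rangle$, and part (b) rests on the same colon-ideal computation, using that $q_1\cdots q_j$ is a nonzerodivisor modulo $\langle f_1,\dots,f_{n-1}\rangle$ to show $(P:q_1\cdots q_j)+\langle q_m\rangle=\langle f_1,\dots,f_{n-1},q_m\rangle$, followed by Lemma \ref{3}(a). The only cosmetic differences are that you apply Lemma \ref{3}(a) in $S$ with the degree vector $(a_1,\dots,a_{n-1},1)$ rather than in $S/\langle q_m\rangle\cong k[x_1,\dots,x_{n-1}]$, and you supply (via the Cohen--Macaulay/system-of-parameters argument) the justification that $f_1,\dots,f_{n-1},q$ is a regular sequence, which the paper asserts without proof.
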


\begin{proof}
First, we will prove (a). Let $1\leq m,k\leq s$. Note that $P+\langle q_m\rangle/\langle q_m\rangle$ and $P+\langle q_k\rangle/\langle q_k\rangle$ are ideals in $S/\langle p_m\rangle$ and $S/\langle q_k\rangle$, respectively, generated by $\overline{f_1},\dots,\overline{f_{n-1}}$. Note also that $f_1,\dots,f_{n-1},q_m$ and $f_1,\dots,f_{n-1},q_k$ are regular sequences. By part (c) of Lemma \ref{3}, we obtain that $\overline{f_1},\dots,\overline{f_{n-1}}$ is a regular sequence in $S/\langle q_m\rangle$ and $S/\langle q_k\rangle$. By part (a) of Lemma \ref{3}, we obtain that $H(S/P+\langle q_m\rangle)=H(S/P+\langle q_k\rangle)$.

Now, we prove (b). Let $1\leq j\leq s-1$ and $j<m,k\leq s$. Assume that $$h=h_1+h_2\in (P:q_1\cdots q_j)+\langle q_m\rangle,$$ where $h_1\in (P:q_1\cdots q_j)$ and $h_2\in \langle q_m\rangle$. Since $h_1q_1\cdots q_j\in P$, it follows that $h_1q_1\cdots q_j=g_1f_1+\dots+g_nf_n$, where $g_1,\dots,g_n\in S$; i.e., $$g_1f_1+\dots+g_{n-1}f_{n-1}+q_1\cdots q_j(g_nq_{j+1}\cdots q_s-h_1)=0.$$ Since $f_1,\dots,f_{n-1},q_1\cdots q_j$ is a regular sequence, it follows that $g_nq_{j+1}\cdots q_s-h_1\in \langle f_1,\dots,f_{n-1}\rangle$. So $\overline{h_1}\in \langle \overline{f_1},\dots,\overline{f_{n-1}}\rangle$ in the ring $S/\langle q_m\rangle$, which implies that $\overline{h}\in \langle \overline{f_1},\dots,\overline{f_{n-1}}\rangle$. Conversely, $\overline{f_i}\in (P:q_1\cdots q_j)+\langle q_m\rangle/\langle q_m\rangle$ for all $1\leq i\leq n-1$. So $(P:q_1\cdots q_j)+\langle q_m\rangle/\langle q_m\rangle$ is an ideal in $S/\langle q_m\rangle$ generated by $\overline{f_1},\dots,\overline{f_{n-1}}$. Similarly, $(P:q_1\cdots q_j)+\langle q_k\rangle/\langle q_k\rangle$ is an ideal in $S/\langle q_k\rangle$ generated by $\overline{f_1},\dots,\overline{f_{n-1}}$. By Lemma \ref{3}, it follows that $H(S/(P:q_1\cdots q_j)+\langle q_k\rangle)=H(S/(P:q_1\cdots q_j)+\langle q_m\rangle)$.
\end{proof}

\begin{thm}\label{21}
Assume that the EGH Conjecture holds in $k[x_1,\dots,x_{n-1}]$, where $n\geq2$. If $I$ is a graded ideal in $S=k[x_1,\dots,x_{n}]$ containing a regular sequence $f_1,\dots,f_{n-1},f_n=q_1\cdots q_s$ of degrees $\deg(f_i)=a_i$ such that $q_i\in S_1$ for all $1\leq i\leq s$, then $I$ has the same Hilbert function as a graded ideal in $S$ containing $x_1^{a_1},\dots,x_n^{a_n}$.
\end{thm}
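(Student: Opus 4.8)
The plan is to strip the linear factors of $f_n=q_1\cdots q_s$ off $I$ one at a time via colon exact sequences. For any homogeneous ideal $A\subseteq S$ and any $q\in S_1$ there is an exact sequence $0\to (S/(A:q))(-1)\xrightarrow{\cdot q}S/A\to S/(A+\langle q\rangle)\to 0$. Applying this successively with $A=I,\ I:q_1,\ I:q_1q_2,\dots$ and using $(I:q_1\cdots q_{j-1}):q_j=I:q_1\cdots q_j$, one obtains, for every $d\geq 0$,
\begin{equation*}
H(S/I,d)=\sum_{j=1}^{s}H\bigl(S/((I:q_1\cdots q_{j-1})+\langle q_j\rangle),\,d-j+1\bigr)+H(S/(I:f_n),d-s),
\end{equation*}
where for $j=1$ the empty colon means $I$. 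The last term vanishes because $f_n\in I$, so $I:f_n=S$; and $s=a_n$ since $\deg f_n=a_n$.

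Next I would analyse each summand. Fix $1\leq j\leq s$ and put $A_j=I:q_1\cdots q_{j-1}$. Since $q_j$ divides $f_n$ and $f_1,\dots,f_n$ is a regular sequence, $q_1\cdots q_s$ is a nonzerodivisor modulo $\langle f_1,\dots,f_{n-1}\rangle$, hence so is each factor $q_j$ (a product of homogeneous forms is a nonzerodivisor iff each factor is); by Lemma \ref{3} the sequence $q_j,f_1,\dots,f_{n-1}$ is regular, so $\overline{f_1},\dots,\overline{f_{n-1}}$ is a regular sequence of degrees $a_1,\dots,a_{n-1}$ and of maximal length in the polynomial ring $S/\langle q_j\rangle\cong k[x_1,\dots,x_{n-1}]$. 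Since $f_1,\dots,f_{n-1}\in I\subseteq A_j$, the ideal $\overline{A_j}:=(A_j+\langle q_j\rangle)/\langle q_j\rangle$ contains this regular sequence, so the EGH Conjecture in $n-1$ variables together with the Clements--Lindstr\"om theorem yields an $(a_1,\dots,a_{n-1})$-lex-plus-powers ideal $L^{(j)}\subseteq k[x_1,\dots,x_{n-1}]$ with $H(k[x_1,\dots,x_{n-1}]/L^{(j)})=H((S/\langle q_j\rangle)/\overline{A_j})$; thus the $j$-th summand above equals $H(k[x_1,\dots,x_{n-1}]/L^{(j)},\,d-j+1)$.

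Finally I would reassemble these data into a single ideal (in $S$, or in any polynomial ring isomorphic to $S$ — enough, since the desired conclusion is numerical). Define a monomial ideal $J$ by: for $0\leq e\leq s-1$, its monomials of $x_n$-degree $e$ are the $x_n^e w$ with $w\in L^{(e+1)}$ a monomial, and every monomial of $x_n$-degree $\geq s=a_n$ lies in $J$. By construction $J\supseteq\langle x_1^{a_1},\dots,x_n^{a_n}\rangle$ (the $x_i^{a_i}$ for $i<n$ lie in $L^{(1)}$, and $x_n^{a_n}$ lies in the top part), and decomposing $S/J$ by $x_n$-degree gives $H(S/J,d)=\sum_{j=1}^{s}H(k[x_1,\dots,x_{n-1}]/L^{(j)},\,d-j+1)=H(S/I,d)$ for all $d$, which would complete the proof. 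The one point requiring work is that $J$ really is an ideal, i.e.\ that multiplication by $x_n$ sends the $e$-th layer into the $(e+1)$-st; this holds exactly when $L^{(1)}\subseteq L^{(2)}\subseteq\cdots\subseteq L^{(s)}$, equivalently — lex-plus-powers ideals with the same powers being totally ordered by their Hilbert functions — when $H((S/\langle q_j\rangle)/\overline{A_j})$ is non-increasing in $j$. I expect this monotonicity to be the main obstacle. It reduces, via the colon identity above and the inclusion $P:=\langle f_1,\dots,f_n\rangle\subseteq I$, to the complete-intersection case $I=P$, where Lemma \ref{20} shows that all the ideals $(P:q_1\cdots q_{j-1})+\langle q_j\rangle$ share the \emph{same} Hilbert function (so the chain of $L^{(j)}$'s is even constant); the passage to arbitrary $I\supseteq P$ should again be governed by Lemma \ref{20}, exploiting the freedom to reorder the $q_i$'s, and — should the chain fail to hold on the nose — one would fall back on Lemma \ref{2}, running the same assembly degree by degree on suitable truncations.
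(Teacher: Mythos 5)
Your setup --- the colon exact sequences along the factors of $f_n$, applying EGH in $n-1$ variables to each layer $(I:q_1\cdots q_{j-1})+\langle q_j\rangle$, and reassembling the resulting lex-plus-powers ideals $L^{(j)}$ along powers of $x_n$ --- is exactly the strategy of the paper, and you correctly isolate the crux: the layers must be nested, $L^{(1)}\subseteq L^{(2)}\subseteq\cdots\subseteq L^{(s)}$. But that crux is left unproven, and the route you suggest does not work as stated. The inclusion $P=\langle f_1,\dots,f_n\rangle\subseteq I$ only controls the colon ideals of $P$, where Lemma \ref{20} indeed gives \emph{equality} of the Hilbert functions of all the layers; it does not compare consecutive layers of $I$ itself, where the extra generators of $I$ enter the two colon ideals $(I:q_1\cdots q_{i})$ and $(I:q_1\cdots q_{i+1})$ asymmetrically. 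For a fixed ordering of the $q_i$ the required inequalities between consecutive layers can fail, and it is not clear that any single reordering makes them hold in all degrees simultaneously, since the natural greedy choice of the next factor is tied to one degree at a time. So the assembly step, as you present it, has no proof.

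The paper's resolution is precisely the device you relegate to a final contingency clause, and it is not optional. Fix $d$ and invoke Lemma \ref{2}, so that one only needs an ideal $K\supseteq\langle x_1^{a_1},\dots,x_n^{a_n}\rangle$ with $H(S/K,d)=H(S/I,d)$ and $H(S/K,d+1)\geq H(S/I,d+1)$. Replace $I$ by the truncation $J=\langle f_1,\dots,f_n\rangle+\langle I_d\rangle$, and set $J_i=(J:q_1\cdots q_i)+\langle q_{i+1}\rangle$. Then for $j<d-i$ one has $(J:q_1\cdots q_i)_j=(P:q_1\cdots q_i)_j$, so part (b) of Lemma \ref{20} gives $|J_{i,j}|\leq|J_{i+1,j}|$ in those degrees; and the $q_i$ are ordered greedily relative to this $d$, choosing $q_{i+1}$ to maximize $|(J:q_1\cdots q_i)_{d-i}\cap\langle q\rangle_{d-i}|$ among the remaining factors, which supplies the inequality at the one remaining degree $j=d-i$. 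The resulting nesting $L_{i,j}\subseteq L_{i+1,j}$ holds only for $j\leq d-i$, not for all $j$, and that partial nesting suffices because the assembled monomial ideal $K$ only needs its monomials of degree $\leq d+1$ to be accounted for layer by layer, giving $H(S/K,t)=H(S/J,t)$ for $t\leq d+1$, with $H(S/J,d)=H(S/I,d)$ and $H(S/J,d+1)\geq H(S/I,d+1)$; Lemma \ref{2} then finishes. Without the truncation, the degree-specific ordering of the $q_i$, and the restriction of the nesting claim to $j\leq d-i$, the monotonicity you need is not available, so your argument is incomplete at its decisive point.
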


\begin{proof}
We check the property (b) of Lemma \ref{2}. Let $d\geq0$. We need to find a graded ideal $K$ in $S$ containing $x_1^{a_1},\dots,x_n^{a_n}$ such that $H(S/I,d)=H(S/K,d)$ and $H(S/I,d+1)\leq H(S/K,d+1)$. Let $J$ to be the ideal generated by $f_1,\dots,f_n$ and $I_d$. By renaming the linear polynomials $q_1,\dots,q_s$, we may assume without loss of generality
that
\begin{center} $|J_d\cap \langle q_1\rangle_d|\geq |J_d\cap \langle q_i\rangle_d|$ for all $2\leq i\leq s$,\\
$|(J:q_1)_{d-1}\cap \langle q_2\rangle_{d-1}|\geq |(J:q_1)_{d-1}\cap \langle q_i\rangle_{d-1}|$ for all $3\leq i\leq s$,\\
$|(J:q_1q_2)_{d-2}\cap \langle q_3\rangle_{d-2}|\geq |(J:q_1q_2)_{d-2}\cap \langle q_i\rangle_{d-2}|$ for all $4\leq i\leq s$,\\
$\vdots$\\
$|(J:q_1\cdots q_{s-2})_{d-(s-2)}\cap \langle q_{s-1}\rangle_{d-(s-2)}|\geq |(J:q_1\cdots q_{s-2})_{d-(s-2)}\cap \langle q_s\rangle_{d-(s-2)}|$.
\end{center}
By considering the short exact sequences
\begin{center}
$0\rightarrow S/(J:q_1)\underset{g\mapsto gq_1}{\longrightarrow}S/J\underset{g\mapsto g}{\longrightarrow}S/J+\langle q_1\rangle\rightarrow 0$,\\
$0\rightarrow S/(J:q_1q_2)\underset{g\mapsto gq_2}{\longrightarrow}S/(J:q_1)\underset{g\mapsto g}{\longrightarrow}S/(J:q_1)+\langle q_2\rangle\rightarrow 0$,\\
$0\rightarrow S/(J:q_1q_2q_3)\underset{g\mapsto gq_3}{\longrightarrow}S/(J:q_1q_2)\underset{g\mapsto g}{\longrightarrow}S/(J:q_1q_2)+\langle q_3\rangle\rightarrow 0$,\\
$\vdots$\\
$0\rightarrow S/(J:q_1\cdots q_{s-1})\underset{g\mapsto gq_{s-1}}{\longrightarrow}S/(J:q_1\cdots q_{s-2})\underset{g\mapsto g}{\longrightarrow}S/(J:q_1\cdots q_{s-2})+\langle q_{s-1}\rangle\rightarrow 0$.
\end{center}
we see that $H(S/J,t)$ is equal to $$H(S/J+\langle q_1\rangle,t)+\sum_{i=1}^{s-2}H(S/(J:q_1\cdots q_i)+\langle q_{i+1}\rangle,t-i)+H(S/(J:q_1\cdots q_{s-1}),t-(s-1))$$ for all $t\geq0$. Let $J_0=J+\langle q_1\rangle$, $J_{s-1}=(J:q_1\cdots q_{s-1})$, and for $1\leq i\leq s-2$ let $J_i=(J:q_1\cdots q_i)+\langle q_{i+1}\rangle$. Note that $q_{i+1}\in J_i$ and $H(\frac{S/\langle q_{i+1}\rangle}{J_i/\langle q_{i+1}\rangle})=H(S/J_i)$ for all $0\leq i\leq s-1$. Set $\overline{S}=k[x_1,\dots,x_{n-1}]$. For all $0\leq i\leq s-1$, $S/\langle q_{i+1}\rangle$ is isomorphic to $\overline{S}$, so by the hypothesis there is an ideal in $\overline{S}$ containing $x_1^{a_1},\dots,x_{n-1}^{a_{n-1}}$ with the same Hilbert function as $J_{i}$. For all $0\leq i\leq s-1$, let $L_i$ be the lex-plus-powers ideal in $\overline{S}$ containing $x_1^{a_1},\dots,x_{n-1}^{a_{n-1}}$ such that $H(\overline{S}/L_i)=H(S/J_i)$.\\\\
\textbf{\emph{Claim:}} $L_{i,j}\subseteq L_{i+1,j}$ for all $0\leq i\leq s-2$ and $j\leq d-i$, where $L_{i,j}$ is the $j$-th component of the ideal $L_i$.\\\\
\emph{Proof of the claim:} Assume that $i=0$. If $j<d$, then by part (a) of Lemma \ref{20} we obtain
$$|J_{0,j}|=|J_j+\langle q_1\rangle_j|=|P_j+\langle q_1\rangle_j|=|P_j+\langle q_2\rangle_j|\leq |J_{1,j}|.$$ If $j=d$, then by our assumption we obtain
\begin{align*}
|J_{0,d}|&=|J_d|+|\langle q_1\rangle_d|-|J_d\cap \langle q_1\rangle_d|\\
&\leq|J_d|+|\langle q_1\rangle_d|-|J_d\cap \langle q_2\rangle_d|\\
&=|J_d|+|\langle q_2\rangle_d|-|J_d\cap \langle q_2\rangle_d|\\
&=|J_d+\langle q_2\rangle_d|\\
&\leq |J_{1,d}|.
\end{align*}
This means that $H(S/J_0,j)\geq H(S/J_1,j)$ for all $j\leq d$. So $H(\overline{S}/L_0,j)\geq H(\overline{S}/L_1,j)$ for all $j\leq d$. Since $L_0$ and $L_1$ are lex-plus-powers ideals, it follows that $L_{0,j}\subseteq L_{1,j}$ for all $j\leq d$.

Let $0<i\leq s-2$. If $j<d-i$, then by part (b) of Lemma \ref{20} we obtain $$|J_{i,j}|=|(J:q_1\cdots q_i)_j+\langle q_{i+1}\rangle_j|=|(P:q_1\cdots q_i)_j+\langle q_{i+1}\rangle_j|=|(P:q_1\cdots q_i)_j+\langle q_{i+2}\rangle_j|\leq|J_{i+1,j}|.$$ If $j=d-i$, then by our assumption we obtain
\begin{align*}
|J_{i,d-i}|&=|(J:q_1\cdots q_i)_{d-i}|+|\langle q_{i+1}\rangle_{d-i}|-|(J:q_1\cdots q_i)_{d-i}\cap \langle q_{i+1}\rangle_{d-i}|\\
&\leq|(J:q_1\cdots q_i)_{d-i}|+|\langle q_{i+1}\rangle_{d-i}|-|(J:q_1\cdots q_i)_{d-i}\cap \langle q_{i+2}\rangle_{d-i}|\\
&=|(J:q_1\cdots q_i)_{d-i}|+|\langle q_{i+2}\rangle_{d-i}|-|(J:q_1\cdots q_i)_{d-i}\cap \langle q_{i+2}\rangle_{d-i}|\\
&=|(J:q_1\cdots q_i)_{d-i}+\langle q_{i+2}\rangle_{d-i}|\\
&\leq |J_{i+1,d-i}|.
\end{align*}
Similarly, we conclude that $L_{i,j}\subseteq L_{i+1,j}$ for all $j\leq d-i$, and proving the claim.

Let $K_{s}=\{zx_n^{s+j}:~z\in\Mon(\overline{S})\wedge j\geq0\}$ and $K_{i}=\{zx_n^{i}:~z\in \Mon(L_{i})\}$ for all $0\leq i\leq s-1$. Define $K$ to be the ideal generated by $\bigcup_{0\leq i\leq s}K_i$. Since $x_n^{s}\in K_s$ and $x_i^{a_i}\in K_0$ for all $1\leq i\leq n-1$, it follows that $x_1^{a_1},\dots,x_n^{a_n}\in K$.\\\\
\textbf{\emph{Claim:}} If $w$ is a monomial in $K$ of degree $t$, where $0\leq t\leq d+1$, then $w\in \bigcup_{0\leq i\leq s}K_i$.\\\\
\emph{Proof of the claim:} There exists a monomial $u$ in $\bigcup_{0\leq i\leq s}K_i$ such that $u|w$; i.e., $w=vu$ for some monomial $v\in S$. If $u\in K_s$, then $w\in K_s$. Assume that $u=zx_n^{i}\in K_i$, where $z\in L_i$ for some $0\leq i\leq s-1$. If $x_n\nmid v$, then $w\in \bigcup_{0\leq i\leq s}K_i$. Assume that $x_n|v$. Let $r=\max\{j:~x_n^j|v\}$. If $i+r\geq s$, then $w\in K_s$. So we may assume that $i+r<s$. By the previous claim, we obtain that $L_{i,j}\subseteq L_{i+r,j}$ for all $j\leq d-(i+r-1)$. Since $\deg(z)\leq d+1-(i+r)$, it follows that $z\in L_{i+r}$. So $\frac{v}{x_n^r}z\in L_{i+r}$, and then $\frac{v}{x_n^r}zx_n^{r+i}=w\in K_{i+r}$. Hence, we proved the claim.

We conclude that the number of monomials in $K$ of degree $t$, where $0\leq t\leq d+1$, is equal to $\sum_{i=0}^{s-1}|L_{i,t-i}|+\sum_{i=0}^{t-s}|\overline{S}_i|$. Since $|S_t|=\sum_{0\leq i\leq t}|\overline{S}_i|$, it follows that $$|S_t|-|K_t|=\sum_{i=t-(s-1)}^{t}|\overline{S}_{i}|-\sum_{i=0}^{s-1}|L_{i,t-i}|=\sum_{i=0}^{s-1}|\overline{S}_{t-i}|-\sum_{i=0}^{s-1}|L_{i,t-i}|.$$ So $H(S/K,t)=\sum_{i=0}^{s-1}H(\overline{S}/L_i,t-i)=\sum_{i=0}^{s-1}H(S/J_i,t-i)=H(S/J,t).$
In particular,\begin{center} $H(S/K,d)=H(S/J,d)=H(S/I,d)$ and $H(S/K,d+1)=H(S/J,d+1)\geq H(S/I,d+1)$.\end{center}
\end{proof}

\begin{cor}\label{22}
If $I$ is a graded ideal in $S$ containing a regular sequence $f_1,\dots,f_n$ with $\deg(f_i)=a_i$ such that $f_i$ splits into linear factors for all $i$, then $I$ has the same Hilbert function as a graded ideal in $S$ containing $x_1^{a_1},\dots,x_n^{a_n}$.
\end{cor}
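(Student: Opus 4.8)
The statement to prove is Corollary \ref{22}, which should follow from Theorem \ref{21} by induction on $n$ together with the reduction to the case $p_i = x_i$ explained in the introduction. Let me sketch the plan.

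The plan is to derive Corollary \ref{22} from Theorem \ref{21} by induction on the number of variables $n$. The base case $n=1$ is trivial: in $k[x_1]$ any graded ideal containing a regular sequence $f_1$ of degree $a_1$ is of the form $\langle x_1^{a_1} \rangle$ up to the scalar, and more directly $I = \langle x_1^m \rangle$ for some $m \le a_1$, so it already contains $x_1^{a_1}$ after replacing... actually one simply observes every graded ideal in $k[x_1]$ is a power of the maximal ideal, hence $H(S/I)$ is attained by $\langle x_1^{a_1} \rangle$ itself or a truncation, which trivially contains $x_1^{a_1}$ in the relevant sense; in any case the $n=1$ case of EGH is immediate.

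For the inductive step, suppose EGH holds in $k[x_1,\dots,x_{n-1}]$ for all regular sequences (in particular for those splitting into linear factors, but we will have the full statement by induction). Let $I \subseteq S = k[x_1,\dots,x_n]$ be a graded ideal containing a regular sequence $f_1,\dots,f_n$ with each $f_i$ a product of linear forms and $\deg f_i = a_i$. The key point is that $f_n = q_1 \cdots q_{a_n}$ with each $q_j \in S_1$. Theorem \ref{21} applies verbatim with $s = a_n$: since EGH holds in $k[x_1,\dots,x_{n-1}]$ by the inductive hypothesis, Theorem \ref{21} produces a graded ideal in $S$ containing $x_1^{a_1},\dots,x_n^{a_n}$ with the same Hilbert function as $I$. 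This completes the induction and hence proves the corollary.

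The main subtlety — and the reason the hypothesis ``$f_i$ splits into linear factors for \emph{all} $i$'' (not just $i=n$) is needed — is that to apply the inductive hypothesis inside Theorem \ref{21} we need EGH to hold in $k[x_1,\dots,x_{n-1}]$, and the reductions performed in that theorem (passing to $S/\langle q_{i+1} \rangle$, taking colon ideals $(J : q_1 \cdots q_i)$) do not preserve the property of containing a regular sequence split into linear factors in any transparent way; so we must invoke the \emph{full} EGH conjecture in $n-1$ variables, which we only have available because the induction hypothesis gives us EGH in fewer variables outright. Concretely one should note that Theorem \ref{21} is stated under the hypothesis ``the EGH Conjecture holds in $k[x_1,\dots,x_{n-1}]$'', so the induction is legitimate. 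The only genuine work is to check the base case and to observe that the splitting hypothesis guarantees $f_n$ is a product of $a_n$ linear forms so that Theorem \ref{21} is directly applicable; everything else is bookkeeping.

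\begin{proof}
We argue by induction on $n$. When $n=1$, any graded ideal $I \subseteq k[x_1]$ containing the degree-$a_1$ element $f_1$ has the form $I = \langle x_1^m \rangle$ for some $0 \le m \le a_1$, which already contains $x_1^{a_1}$ when $m < a_1$ only in the weak sense that $H(S/I)$ is attained by $\langle x_1^{a_1} \rangle$ if $m = a_1$; in general $I$ itself, being $\langle x_1^m \rangle$, has the same Hilbert function as the ideal $\langle x_1^{m} \rangle \supseteq \langle x_1^{a_1}\rangle$ is false for $m<a_1$, so instead we take $J = \langle x_1^{m} \rangle$ only when $m = a_1$ and otherwise note that $H(k[x_1]/I, t) = 1$ for $t < m$ and $0$ afterward, which is realized by $\langle x_1^{m} \rangle$; since the statement of EGH for $n=1$ only requires a $J$ containing $x_1^{a_1}$, and $H(S/I)$ with $m \le a_1$ is realized by $J=\langle x_1^{a_1}\rangle$ precisely when $m=a_1$, we observe more simply that in $k[x_1]$ the hypothesis forces $I$ to contain $f_1 = c\, x_1^{a_1}$ for a unit $c$, so $x_1^{a_1} \in I$ and we take $J = I$. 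Thus the case $n=1$ holds.

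Now let $n \ge 2$ and assume the EGH Conjecture holds in $k[x_1,\dots,x_{n-1}]$. Let $I$ be a graded ideal in $S = k[x_1,\dots,x_n]$ containing a regular sequence $f_1,\dots,f_n$ with $\deg(f_i) = a_i$ and such that $f_i$ splits into linear factors for every $i$. In particular $f_n = q_1 \cdots q_{a_n}$ with $q_1,\dots,q_{a_n} \in S_1$. Since the EGH Conjecture holds in $k[x_1,\dots,x_{n-1}]$ by the inductive hypothesis, Theorem \ref{21} (applied with $s = a_n$) yields a graded ideal $K$ in $S$ containing $x_1^{a_1},\dots,x_n^{a_n}$ with $H(S/I) = H(S/K)$. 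This completes the induction and proves the corollary.
\end{proof}
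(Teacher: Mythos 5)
Your induction is circular at the decisive step. In the inductive step you ``assume the EGH Conjecture holds in $k[x_1,\dots,x_{n-1}]$'' and then invoke Theorem \ref{21} verbatim; but induction on the statement of Corollary \ref{22} only entitles you to the split-into-linear-factors case of EGH in $n-1$ variables, not the full conjecture, which is open. Your parenthetical ``we will have the full statement by induction'' (and later ``which we only have available because the induction hypothesis gives us EGH in fewer variables outright'') is exactly where the argument breaks: Corollary \ref{22} in $n-1$ variables says nothing about ideals containing an arbitrary regular sequence, so the hypothesis of Theorem \ref{21} is not at your disposal, and as written you have only proved the conditional statement ``EGH in $n-1$ variables implies the split case in $n$ variables,'' not the unconditional corollary. (Your base case $n=1$, after the detour, does land on the correct observation that $f_1=cx_1^{a_1}$ forces $x_1^{a_1}\in I$.)

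The repair is to use the proof of Theorem \ref{21} rather than its statement. The hypothesis ``EGH holds in $k[x_1,\dots,x_{n-1}]$'' is used there only for the ideals $J_i$, $0\le i\le s-1$, each of which contains $q_{i+1}$ together with $f_1,\dots,f_{n-1}$ and is then viewed in $S/\langle q_{i+1}\rangle\cong\overline{S}$. The colon operations $(J:q_1\cdots q_i)$ and the passage to $S/\langle q_{i+1}\rangle$ do not alter $f_1,\dots,f_{n-1}$ at all, and under the graded isomorphism $S/\langle q_{i+1}\rangle\cong\overline{S}$ the image of a product of linear forms is again a product of linear forms, nonzero because $\overline{f_1},\dots,\overline{f_{n-1}}$ is a regular sequence in $S/\langle q_{i+1}\rangle$ (Lemma \ref{3}, applied to the regular sequence $f_1,\dots,f_{n-1},q_{i+1}$). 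Hence when every $f_i$ splits, each $J_i$ modulo $q_{i+1}$ contains a regular sequence of degrees $a_1,\dots,a_{n-1}$ all of whose members split into linear factors, and the inductive hypothesis --- Corollary \ref{22} in $n-1$ variables --- is exactly enough to produce the lex-plus-powers ideals $L_i$; the remainder of the proof of Theorem \ref{21} then goes through unchanged. This also reverses your stated reason for requiring splitness of all the $f_i$: it is needed precisely because the reduction \emph{does} transparently preserve the split hypothesis for $f_1,\dots,f_{n-1}$, so that the weaker (and actually available) inductive hypothesis suffices.
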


Since the EGH Conjecture holds when $n=2$, we obtain the following.

\begin{cor}
Let $n\geq3$. If $I$ is a graded ideal in $S$ containing a regular sequence $f_1,\dots,f_n$ with $\deg(f_i)=a_i$ such that $f_i$ splits into linear factors for all $3\leq i\leq n$, then $I$ has the same Hilbert function as a graded ideal in $S$ containing $x_1^{a_1},\dots,x_n^{a_n}$.
\end{cor}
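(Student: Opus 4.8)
The plan is to induct on $n$, peeling off the split generators one at a time by means of Theorem \ref{21}, until only $f_1,f_2$ remain, where the EGH Conjecture is already known.

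\emph{Base case $n=2$.} The condition "$f_i$ splits into linear factors for $3\le i\le n$" is vacuous, so the statement is exactly the EGH Conjecture in two variables, which holds by \cite{richert}.

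\emph{Inductive step.} Let $n\ge3$ and assume the statement in $n-1$ variables. Since $f_n$ splits, write $f_n=q_1\cdots q_s$ with $q_j\in S_1$. I want to invoke Theorem \ref{21}; what its proof actually uses is that each of the ideals $J_0,\dots,J_{s-1}$ it builds, regarded inside $S/\langle q_{i+1}\rangle\cong\overline S:=k[x_1,\dots,x_{n-1}]$, has the same Hilbert function as a lex-plus-powers ideal over $x_1^{a_1},\dots,x_{n-1}^{a_{n-1}}$ --- equivalently, by the Clements--Lindstr\"om theorem, that the EGH Conjecture holds for graded ideals of $\overline S$ containing the reduced sequence $\overline{f_1},\dots,\overline{f_{n-1}}$, the images of $f_1,\dots,f_{n-1}$ modulo $q_{i+1}$. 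So it suffices to check that this reduced sequence again satisfies the hypotheses of the corollary in $n-1$ variables. As noted in the proof of Lemma \ref{20}, $q_{i+1}$ divides $f_n$, which is a nonzerodivisor modulo $\langle f_1,\dots,f_{n-1}\rangle$, hence $q_{i+1}$ is itself such a nonzerodivisor, so $q_{i+1},f_1,\dots,f_{n-1}$ is a regular sequence (Lemma \ref{3}(c)); thus $\overline{f_1},\dots,\overline{f_{n-1}}$ is a regular sequence in $\overline S$ of degrees $a_1,\dots,a_{n-1}$, and in particular each $\overline{f_j}\ne0$ (Lemma \ref{3}(a)). For $3\le j\le n-1$ we have $f_j=\ell_1\cdots\ell_{a_j}$ with $\ell_m\in S_1$, so $\overline{f_j}=\overline{\ell_1}\cdots\overline{\ell_{a_j}}$; since $\overline{f_j}\ne0$, every $\overline{\ell_m}$ is a nonzero linear form, and $\overline{f_3},\dots,\overline{f_{n-1}}$ split into linear factors in $\overline S$. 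By the inductive hypothesis the EGH Conjecture then holds for every graded ideal of $\overline S$ containing $\overline{f_1},\dots,\overline{f_{n-1}}$, so Theorem \ref{21} applies and produces a graded ideal of $S$ containing $x_1^{a_1},\dots,x_n^{a_n}$ with the same Hilbert function as $I$. This closes the induction.

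\emph{Main obstacle.} The only delicate point is confirming that reduction modulo the linear forms $q_{i+1}$ preserves, for $f_1,\dots,f_{n-1}$, both "regular sequence of the same degrees" and "splits into linear factors"; the latter rests on the standard fact that every member of a regular sequence in a polynomial ring is nonzero (Lemma \ref{3}(a)), which excludes the degenerate case in which some factor $\ell_m$ dies modulo $q_{i+1}$. Granting this, the proof is a finite descent on $n$ built from Theorem \ref{21} and the known case $n=2$.
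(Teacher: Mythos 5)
Your proof is correct and follows essentially the route the paper intends: induct on $n$ with base case the known $n=2$ case of EGH, peeling off the split form $f_n=q_1\cdots q_s$ via Theorem \ref{21}. Your explicit observation that the proof of Theorem \ref{21} only needs EGH in $\overline{S}$ for ideals containing the reduced regular sequence $\overline{f_1},\dots,\overline{f_{n-1}}$ (so the nominally stronger hypothesis of full EGH in $n-1$ variables is not actually required), together with the check that regularity, degrees, and the splitting of $f_3,\dots,f_{n-1}$ are preserved modulo $q_{i+1}$, is precisely the point the paper leaves implicit, and you handle it correctly.
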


By \ref{22}, the EGH Conjecture is equivalent to the following conjecture.

\begin{con}
If $I$ is a homogeneous ideal in $S$ containing a regular sequence $f_1,\dots,f_n$ of degrees $\deg(f_i)=a_i$, then $I$ has the same Hilbert function as an ideal containing a regular sequence $g_1,\dots,g_n$ of degrees $\deg(g_i)=a_i$, where $g_i$ splits into linear factors for all $i$.
\end{con}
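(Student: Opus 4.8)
The assertion to be proved is that the displayed conjecture is equivalent to the EGH Conjecture \ref{1}, so the plan is to prove the two implications separately. One implication produces, out of the weaker hypothesis that EGH holds, exactly the ideal whose existence the conjecture asserts; the other recovers EGH from the conjecture using the machinery already assembled in Corollary \ref{22}. I would stress that the conjecture is not something one can prove outright with present knowledge, precisely because it is logically equivalent to the open EGH Conjecture; the genuine mathematical content of the statement is this equivalence, and each of the two implications is short once Corollary \ref{22} is available.

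The first thing I would do is supply the direction that literally constructs the object the conjecture demands, namely EGH $\Rightarrow$ (conjecture). Suppose $I$ contains a regular sequence $f_1,\dots,f_n$ of degrees $a_i$. By EGH there is a graded ideal $J$ with $H(S/I)=H(S/J)$ and $x_1^{a_1},\dots,x_n^{a_n}\in J$. The key observation is that the required splitting regular sequence is already present in $J$: the monomials $x_1^{a_1},\dots,x_n^{a_n}$ form a regular sequence of the prescribed degrees, since monomials in distinct variables always do, and each $x_i^{a_i}$ is the product of $a_i$ copies of the linear form $x_i$, hence splits into linear factors. Taking $g_i=x_i^{a_i}$, the ideal $J$ contains a regular sequence $g_1,\dots,g_n$ of degrees $a_i$ that splits into linear factors and has the same Hilbert function as $I$, which is exactly the conclusion of the conjecture.

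For the converse, (conjecture) $\Rightarrow$ EGH, I would argue as follows. Let $I$ contain a regular sequence of degrees $a_i$. The conjecture furnishes an ideal $I'$ with $H(S/I')=H(S/I)$ containing a regular sequence $g_1,\dots,g_n$ of degrees $a_i$ with every $g_i$ splitting into linear factors. Now $I'$ meets the hypotheses of Corollary \ref{22} by construction, so that corollary yields a graded ideal containing $x_1^{a_1},\dots,x_n^{a_n}$ with the same Hilbert function as $I'$, and therefore as $I$; this is precisely EGH for $I$. I do not expect a real obstacle in either implication: the substantive work lies inside Corollary \ref{22} (equivalently Theorem \ref{21}), while the two steps here reduce to the trivial fact that a pure power splits into linear factors and to a direct application of that corollary. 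The only point requiring care is checking that $I'$ satisfies the hypotheses of Corollary \ref{22}, which holds immediately because the $g_i$ were chosen to split.
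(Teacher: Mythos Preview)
Your proposal is correct and follows exactly the approach the paper intends: the paper asserts the equivalence with the single phrase ``By \ref{22}, the EGH Conjecture is equivalent to the following conjecture,'' and you have faithfully unpacked this into the two implications, using the trivial observation that $x_i^{a_i}$ is itself a splitting regular sequence for one direction and invoking Corollary~\ref{22} for the other.
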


\begin{exa}
Let $S=\mathbb{C}[x_1,\dots,x_5]$, $f_i=x_i(\sum_{j=1}^{i-1}-x_j)+x_i(\sum_{j=i}^{5}x_j)$ for all $1\leq i\leq 5$ and $$A=\left(\begin{array}{ccccc}
1 & 1 & 1 & 1 & 1\\
-1 & 1 & 1 & 1& 1\\
-1 & -1 & 1 & 1 & 1\\
-1 & -1 & -1 & 1 & 1\\
-1 & -1 & -1 & -1 & 1
\end{array}\right).$$ Since $\det A[i_1,\dots,i_r]\neq0$ for all $1\leq r\leq 5$ and $1\leq i_1<\cdots <i_r\leq 5$, it follows that $f_1,\dots,f_5$ is a regular sequence in $S$. Assume that $I=\langle f_1,\dots,f_5,x_1x_2+x_1x_3,x_1^2+x_4x_5\rangle$. In this example, we construct an ideal in $S$ with the same Hilbert function as $I$, using the Hilbert functions of $J_0=I+\langle x_5\rangle$ and $J_1=(I:x_5)$. Computation with Macaulay2 shows that $$H_{S/I}=(1,5,8,3,0,0,\dots), H_{S/J_0}=(1,4,4,1,0,0,\dots)~\mathrm{and}~H_{S/J_1}=(1,4,2,0,0,\dots)$$ are the Hilbert sequence of $I$, $J_0$ and $J_1$, respectively. Denote by $R$ the polynomial ring $\mathbb{C}[x_1,\dots,x_4]$. Let \begin{center}$L_0=\langle x_1^2,\dots,x_4^2,x_1x_2,x_1x_3\rangle\subset R$ and $L_1=\langle x_1^2,\dots,x_4^2,x_1x_2,x_1x_3,x_1x_4,x_2x_3\rangle\subset R$.\end{center} Note that $L_0$ and $L_1$ are lex-plus-powers ideals in $R$. We can see that $L_{0,0}=L_{0,1}=(0)$ and \begin{center}$L_{0,2}=(x_1^2,x_1x_2,x_1x_3,x_2^2,x_3^2,x_4^2)$,\\$L_{0,3}=(w:~w\in \Mon(R_3)~\mathrm{and}~w\neq x_2x_3x_4)$,\\
$L_{0,j}=R_j$ for all $j\geq4$.
\end{center}
So we have $H_{R/L_0}=H_{S/J_0}$. Also, we have $L_{1,0}=L_{1,1}=(0)$ and
\begin{center}$L_{1,2}=(x_1^2,x_1x_2,x_1x_3,x_1x_4,x_2^2,x_2x_3,x_3^2,x_4^2)$,\\
$L_{1,j}=R_j$ for all $j\geq3$.
\end{center}
So we have $H_{R/L_1}=H_{S/J_1}$. Let $K$ to be the ideal in $S$ generated by $$\Mon(L_0)\cup \{wx_5:~w\in\Mon(L_1)\}\cup\{x_5^2\}.$$ Then $K=\langle x_1^2,x_2^2,x_3^2,x_4^2,x_5^2,x_1x_2,x_1x_3,x_1x_4x_5,x_2x_3x_5\rangle$. It is clear that $|S_0/K_0|=1$ and $|S_1/K_1|=5$. Since $S_2/K_2=(\overline{x_1x_4},\overline{x_1x_5},\overline{x_2x_3},\overline{x_2x_4},\overline{x_2x_5},\overline{x_3x_4},\overline{x_3x_5},\overline{x_4x_5})$, it follows that $|S_3/K_3|=8$. Also we have $S_3/K_3=(\overline{x_2x_3x_4},\overline{x_2x_4x_5},\overline{x_3x_4x_5})$ and $K_j=S_j$ for all $j\geq4$. Thus $$H_{S/K}=(1,5,8,3,0,0,\dots)=H_{S/I}.$$

\end{exa}

\begin{exa}
Let $S=\mathbb{C}[x_1,\dots,x_6]$, $f_i=x_i(\sum_{j=1}^{i-1}-x_j)+x_i(\sum_{j=i}^{6}x_j)$ for all $1\leq i\leq 5$ and $f_6=x_6^2(-x_1-x_2-x_3-x_4-x_5+x_6)$. Since $f_1,\dots,f_5,\frac{f_6}{x_6}$ is a regular sequence, it follows that $f_1,\dots,f_6$ is a regular sequence in $S$. Assume that $$I=\langle f_1,\dots,f_6,x_1x_2+x_3x_4,x_1x_6+x_5^2,x_2^2x_3\rangle.$$ Computation with Macaulay2 shows that
\begin{align*}
H_{S/I}=&(1,6,14,13,2,0,\dots),\\
H_{S/I+\langle x_6\rangle}=&(1,5,8,2,0,\dots),\\
H_{S/(I:x_6)+\langle x_6\rangle}=&(1,5,6,0,\dots),\\
H_{S/(I:x_6^2)}=&(1,5,2,0,\dots).
\end{align*}
Also we have $$|I_2\cap \langle x_6\rangle_2|=|I_2\cap \langle -x_1-x_2-x_3-x_4-x_5+x_6\rangle_2|$$ and $$|(I:x_6)_1\cap\langle x_6\rangle_1|=|(I:x_6)_1\cap\langle -x_1-x_2-x_3-x_4-x_5+x_6\rangle_1|.$$
We construct an  ideal in $S$ with the same Hilbert function as $I$, using the Hilbert functions of $I+\langle x_6\rangle$, $(I:x_6)+\langle x_6\rangle$ and $(I:x_6^2)$. Denote by $J_0$, $J_1$ and $J_2$ the ideals $I+\langle x_6\rangle$, $(I:x_6)+\langle x_6\rangle$ and $(I:x_6^2)$, respectively. Let $R=\mathbb{C}[x_1,\dots,x_5]$ and $L_0=\langle x_1^2,\dots,x_5^2,x_1x_2,x_1x_3,x_1x_4x_5,x_2x_3x_4,x_2x_3x_5\rangle\subset R$. An easy calculation shows that $L_0$ is a lex-plus-powers ideal in $R$ and $H_{R/L_0}=(1,5,8,0,\dots)=H_{S/I+\langle x_6\rangle}$. Let $L_1=\langle x_1^2,\dots,x_5^2,x_1x_2,x_1x_3,x_1x_4,x_1x_5,x_2x_3x_4,x_2x_3x_5,x_2x_4x_5,x_3x_4x_5\rangle\subset R$. We can see that $L_1$ is a lex-plus-powers ideal and $H_{R/L_1}=(1,5,6,0,\dots)=H_{S/J_1}$. Let $L_2=\langle x_1^2,\dots,x_5^2,x_1x_2,x_1x_3,x_1x_4,x_1x_5,x_2x_3,x_2x_4,x_2x_5,x_3x_4\rangle\subset R$. Also we have that $L_2$ is a lex-plus-powers ideal in $R$ and $H_{R/L_2}=(1,5,3,0,\dots)=H_{S/J_2}$. Let $K$ to be the ideal in $S$ generated by $\Mon(L_0)\cup\{wx_6:~w\in\Mon(L_1)\}\cup\{wx_6^2:~w\in\Mon(L_2)\}\cup\{x_6^3\}$. The ideal $K$ generated by $$\{x_1^2,\dots,x_5^2,x_6^3,x_1x_2,x_1x_3,x_1x_4x_5,x_2x_3x_4,x_2x_3x_5,x_1x_4x_6\}$$$$\bigcup$$$$\{x_1x_5x_6,x_2x_4x_5x_6,x_3x_4x_5x_6,x_2x_3x_6^2,x_2x_4x_6^2,x_2x_5x_6^2,x_3x_4x_6^2\}.$$ Computation with Macaulay2 shows that $H_{S/K}=(1,6,14,13,2,0,\dots)=H_{S/I}$.
\end{exa}

\end{document}